\def \ce{\centering}
\def \R{\mathbb{R}}
\def \P{\mathbb{P}}
\def \N{\mathbb{N}}
\def \E{\mathbb{E}}
\def \D{\mathbb{D}}
\def \T{\mathbb{T}}
\def \H{\mathbb{H}}
\def \L{\mathbb{L}}
\def \C{\mathbb{C}}
\def \Z{\mathbb{Z}}
\def \bf{\textbf}
\def \it{\textit}
\def \sc{\textsc}
\def \bop {\noindent\textbf{Proof }}
\def \eop {\hbox{}\nobreak\hfill
\vrule width 2mm height 2mm depth 0mm
\par \goodbreak \smallskip}
\def \ni {\noindent}
\def \sni {\ss\ni}
\def \bni {\bigskip\ni}
\def \ss {\smallskip}
\def \F{\mathcal{F}}
\def \g{\mathcal{g}}
\def \K{\mathcal{K}}
\begin{document}

\title[  Controllability  results   ]
{Controllability of fractional neutral  functional differential
equations driven by fractional Brownian motion  with infinite delay}

\author[   Lakhel    ]{  El Hassan  Lakhel  }
\maketitle
\begin{center}{ Cadi Ayyad University,  National School of
Applied Sciences,  46000 Safi, Morocco}\end{center}

 \vspace{0.8cm}
  \footnotetext[1]{Lakhel E.: e.lakhel@uca.ma (Corresponding author)}
\noindent \textbf{Abstract. } In this paper we study the
controllability  of   fractional  neutral stochastic functional
differential equations with infinite delay driven  by    fractional
Brownian motion in a real separable Hilbert space.
  The controllability results are obtained by using stochastic analysis and  a fixed-point strategy. Finally, an illustrative example
is provided to demonstrate the effectiveness of the theoretical
result.\\



\noindent \textbf{Keywords}: Controllability,   fractional neutral
functional differential equations, fractional powers of closed
operators, infinite delay, fractional Brownian motion.

\vspace{.08in} \noindent {\textbf AMS Subject Classification:}
35R10, 93B05  60G22, 60H20.

\def \ce{\centering}
\def \R{\mathbb{R}}
\def \P{\mathbb{P}}
\def \N{\mathbb{N}}
\def \E{\mathbb{E}}
\def \D{\mathbb{D}}
\def \T{\mathbb{T}}
\def \H{\mathbb{H}}
\def \L{\mathbb{L}}
\def \C{\mathbb{C}}
\def \Z{\mathbb{Z}}
\def \bf{\textbf}
\def \it{\textit}
\def \sc{\textsc}
\def \bop {\noindent\textbf{Proof }}
\def \eop {\hbox{}\nobreak\hfill
\vrule width 2mm height 2mm depth 0mm
\par \goodbreak \smallskip}
\def \ni {\noindent}
\def \sni {\ss\ni}
\def \bni {\bigskip\ni}
\def \ss {\smallskip}
\def \F{\mathcal{F}}
\def \g{\mathcal{g}}
\def \K{\mathcal{K}}
 \numberwithin{equation}{section}
\newtheorem{theorem}{Theorem}[section]
\newtheorem{lemma}[theorem]{Lemma}
\newtheorem{proposition}[theorem]{Proposition}
\newtheorem{definition}[theorem]{Definition}
\newtheorem{example}[theorem]{Example}
\newtheorem{remark}[theorem]{Remark}
\allowdisplaybreaks

\section{Introduction}
Fractional Brownian motion (fBm)  $\{B^H(t):t \in \R\}$ is a
Gaussian stochastic process, which depends on a parameter
$H\in(0,1)$ called Hurst index, for additional details on the
fractional Brownian motion, we refer the reader to \cite{MV68}. This
stochastic  process has self-similarity, stationary increments, and
long-range dependence properties.  It is known that fractional
Brownian motion is a generalization of Brownian motion and it
reduces to a standard Brownian motion when   $H=\frac{1}{2}$.
Fractional Brownian motion is not a semimartingale if
$H\neq\frac{1}{2}$ (see Biagini \it{al.} \cite{biagini08}), the
classical It\^{o} theory cannot be used to construct a stochastic
calculus with respect to fBm.

Fractional differential equations have recently been proved to be
valuable tools in the modeling of many phenomena in various fields
of  physics, finance,   electrical engineering, telecommunication
networks, and so on.    There has been a significant development in
fractional differential equations.   Some authors have considered
fractional stochastic       equations, we refer to Ahmed
\cite{ahm09}, El-Bori
      \cite{bor06},  Cui and Yan \cite{cui12}, Sakthivel et al.
      \cite{sak13a,sak13b}. The perturbed terms of these fractional
      equations are Wiener processes. For more details, one can see
the monographs of Kilbas et al. \cite{kil06},       Zhou
\cite{zho14},  and  Zhou et al. \cite{zho12}    and the references
      therein.

 In many areas of science, there has been an
increasing interest in the investigation of the systems
incorporating memory or aftereffect, i.e., there is the effect of
delay on state equations. Therefore, there is a real need to discuss
stochastic evolution systems with delay. In many mathematical
models the claims often display long-range memories, possibly due to
extreme weather, natural disasters, in some cases, many stochastic
dynamical systems depend not only on present and past states, but
also contain the derivatives with delays. Neutral functional
differential equations are often used to describe such systems.

Moreover, control theory  is an area of  application-oriented
mathematics which deals  with basic principles underlying the
analysis and design of control systems. Roughly speaking,
controllability generally means that it is possible to steer a
dynamical control system from an arbitrary initial state to an
arbitrary final state using the set of admissible controls.
Controllability plays a crucial role in a lot of control problems,
such as the case of stabilization of unstable systems by feedback or
optimal control \cite{klam07,klam13}. The controllability concept
has been studied extensively in the fields of finite-dimensional
systems, infinite-dimensional systems, hybrid systems, and
behavioral systems.  If a system cannot be controlled completely
then different types of controllability can be defined such as
approximate, null, local null and local approximate null
controllability. For more details the reader may refer to
\cite{klam13,ren11,ren13a} and the references therein. In this
paper, we study the controllability  of fractional neutral
functional stochastic differential equations   of the form
 {
\begin{equation}\label{eq1}
 \left\{\begin{array}{ll}
 &d[J^{1-\alpha}_t(x(t)-g(t,x_t)-\varphi(0)+g(0,\varphi))]=[Ax(t)+f(t,x_t)+Bu(t)]dt\\\\
 &\qquad\qquad\qquad\qquad\qquad\qquad\qquad\qquad\qquad\quad+\sigma (t)dB^H(t),\, t
\in[0,
T],\\\\
 &x(t)=\varphi(t)\in L^2(\Omega,\mathcal{B}_h),\; for\; a.e.\;
t\in(-\infty,0],
\end{array}\right.
\end{equation}
where $\frac12<\alpha<1,$  $J^{1-\alpha}$ is the $(1-\alpha)-$order
Riemann-Liouville  fractional integral operator,  $A$ is the
infinitesimal generator of an analytic semigroup of bounded linear
operators, $(S(t))_{t\geq 0}$, in a Hilbert space $X$;  $B^H$ is a
fractional Brownian motion with $H>\frac{1}{2} $ on a real and
separable Hilbert space $Y$; and the control function $u(\cdot)$
takes values in $L^2([0,T],U)$, the Hilbert space of admissible
control functions for a separable Hilbert space  $U$; and $B$ is a
bounded linear operator from $U$ into $X$.

\pagestyle{fancy} \fancyhead{} \fancyhead[EC]{E. LAKHEL}
\fancyhead[EL,OR]{\thepage} \fancyhead[OC]{ Controllability
results} \fancyfoot{}
\renewcommand\headrulewidth{0.5pt}

The history $x_{t}:(-\infty,0]\to X$, $x_{t}(\theta)=x(t+\theta)$,
belongs to an abstract phase  space ${\mathcal{B}_h}$ defined
axiomatically,   and $ f,g:[0,T]\times \mathcal{B}_h \to X $, and
$\sigma:[0,T] \rightarrow \mathcal{L}_2^0(Y,X)$, are appropriate
functions, where $\mathcal{L}_2^0(Y,X)$ denotes the space of all
$Q$-Hilbert-Schmidt operators from $Y$ into $X$ (see section 2
below).


For potential applications in telecommunications networks, finance
markets, biology and other fields \cite{iRL,iLL}, stochastic
differential equations driven by fractional Brownian motion have
attracted researcher's  great interest. Especially, we mention here
the recent papers  \cite{lak16,lak15,lak8,ren13}. Moreover, Dung
studied the existence and uniqueness of impulsive stochastic
      Volterra integro-differential equation driven by fBm in
     \cite{dung15} .  Using  the Riemann-Stieltjes integral,
      Boufoussi  et al.  \cite{boufoussi2} proved the existence and
    uniqueness of a mild solution to a related problem and studied the dependence of the
    solution on the initial condition in   infinite dimensional space.
    More
    recently, Li \cite{li13} investigated the existence  of mild solution to a class of stochastic delay fractional evolution equations driven by fBm.
      Caraballo et al.   \cite{carab}, and Boufoussi
     and Hajji \cite{boufoussi3} have discussed the existence,
     uniqueness and exponential asymptotic behavior
     of mild solutions by using the Wiener integral.

To the best of the author's knowledge, an investigation concerning
the controllability for fractional  neutral stochastic differential
equations with infinite delay of the form \eqref{eq1} driven by a
fractional Brownian motion has not yet been conducted. Thus, we will
make the first attempt to study such problem in this paper. Our
results are motivated by those in \cite{lak16,lak8} where the
controllability of mild solutions to  neutral stochastic functional
integro-differential equations driven by fractional Brownian motion
with finite delays   are studied.

The  outline  of this paper is  as follows:  In the next section,
some necessary notations and concepts are provided. In Section 3, we
derive the controllability of fractional neutral
 stochastic differential systems driven by a fractional Brownian motion.
Finally, in Section 4, we conclude with an example to illustrate the
applicability of the general theory.

\section{Preliminaries}

We collect some notions, concepts and lemmas concerning the Wiener
integral with respect to an infinite dimensional fractional
Brownian,  and we recall some basic results  which will be used
throughout the whole of this paper.

Let $(\Omega,\mathcal{F}, \mathbb{P})$ be a complete probability
space. A standard fractional Brownian motion (fBm) $\{\beta^H(t),
t\in\mathbb{R}\}$
 with Hurst parameter $H\in (0, 1)$ is a zero mean  Gaussian process with continuous
sample paths such that
\begin{eqnarray}\label{A3}
R_H(t ,
s)=\mathbb{E}[\beta^H(t)\beta^H(s)]=\frac{1}{2}\big(t^{2H}+s^{2H}-|t-s|^{2H}\big),\qquad
  \qquad s, t\in\mathbb{R}.
\end{eqnarray}


Let $X$ and $Y$ be two real, separable Hilbert spaces and let
$\mathcal{L}(Y,X)$ be the space of bounded linear operator from $Y$
to $X$. For the sake of convenience, we shall use the same notation
to denote the norms in $X,Y$ and $\mathcal{L}(Y,X)$. Let $Q\in
\mathcal{L}(Y,Y)$ be an operator defined by $Qe_n=\lambda_n e_n$
with finite trace
 $trQ=\sum_{n=1}^{\infty}\lambda_n<\infty$. where $\lambda_n \geq 0 \; (n=1,2...)$ are non-negative
  real numbers and $\{e_n\}\;(n=1,2...)$ is a complete orthonormal basis in $Y$.

We define the infinite dimensional fBm on $Y$ with covariance $Q$ as

 $$
 B^H(t)=B^H_Q(t)=\sum_{n=1}^{\infty}\sqrt{\lambda_n}e_n\beta_n^H(t),
 $$
 where $\beta_n^H$ are real, independent fBm's. This process is  Gaussian, it
 starts from $0$, has zero mean and covariance:
 $$E\langle B^H(t),x\rangle\langle B^H(s),y\rangle=R(s,t)\langle Q(x),y\rangle \;\; \mbox{for all}\; x,y \in Y \;\mbox {and}\;  t,s \in [0,T]$$
In order to define Wiener integrals with respect to the $Q$-fBm, we
introduce the space $\mathcal{L}_2^0:=\mathcal{L}_2^0(Y,X)$  of all
$Q$-Hilbert-Schmidt operators $\psi:Y\rightarrow X$. We recall that
$\psi \in \mathcal{L}(Y,X)$ is called a $Q$-Hilbert-Schmidt
operator, if
$$  \|\psi\|_{\mathcal{L}_2^0}^2:=\sum_{n=1}^{\infty}\|\sqrt{\lambda_n}\psi e_n\|^2 <\infty,
$$
and that the space $\mathcal{L}_2^0$ equipped with the inner product
$\langle \varphi,\psi
\rangle_{\mathcal{L}_2^0}=\sum_{n=1}^{\infty}\langle
\varphi e_n,\psi e_n\rangle$ is a separable Hilbert space.\\
Let $\phi(s);\,s\in [0,T]$ be a function with values in
$\mathcal{L}_2^0(Y,X)$, such that $\sum_{n=1}^{\infty}\|K^*\phi
Q^{\frac{1}{2}}e_n\|_{\mathcal{L}_2^0}^2<\infty.$ The Wiener
integral of $\phi$ with respect to $B^H$ is defined by

\begin{equation}\label{int}
\int_0^t\phi(s)dB^H(s)=\sum_{n=1}^{\infty}\int_0^t
\sqrt{\lambda_n}\phi(s)e_nd\beta^H_n(s).
\end{equation}

Now, we end this subsection by stating the following result which is
fundamental to prove our result. It can be proved by  similar
arguments as those used to prove   Lemma 2 in \cite{carab}.
\begin{lemma}\label{lem2}
If $\psi:[0,T]\rightarrow \mathcal{L}_2^0(Y,X)$ satisfies $\int_0^T
\|\psi(s)\|^2_{\mathcal{L}_2^0}ds<\infty$,
 then the above sum in $(\ref{int})$ is well defined as a $X$-valued random variable and
 we have$$ \mathbb{E}\|\int_0^t\psi(s)dB^H(s)\|^2\leq 2Ht^{2H-1}\int_0^t \|\psi(s)\|_{\mathcal{L}_2^0}^2ds.
 $$
\end{lemma}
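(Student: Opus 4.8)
The plan is to reduce the infinite-dimensional estimate to a countable sum of one-dimensional estimates for scalar fBm's, exploiting the orthogonality built into the definition $B^H(t)=\sum_{n}\sqrt{\lambda_n}e_n\beta_n^H(t)$ and the series representation \eqref{int}. First I would recall the classical scalar fact: for a deterministic real-valued function $h$ with $\int_0^T h(s)^2\,ds<\infty$, the Wiener integral $\int_0^t h(s)\,d\beta^H(s)$ with respect to a standard scalar fBm of Hurst index $H>\tfrac12$ is a well-defined centered Gaussian random variable with
\begin{equation*}
\mathbb{E}\Big|\int_0^t h(s)\,d\beta^H(s)\Big|^2
= H(2H-1)\int_0^t\!\!\int_0^t h(u)h(v)|u-v|^{2H-2}\,du\,dv
\le 2Ht^{2H-1}\int_0^t h(s)^2\,ds,
\end{equation*}
the last inequality being the standard consequence of the Hardy--Littlewood inequality (or a direct estimate on the kernel $|u-v|^{2H-2}$). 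This is exactly the scalar version of Lemma 2 in \cite{carab}, and I would cite it in that form.

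Next I would apply this componentwise. Write $\psi_n(s):=\sqrt{\lambda_n}\,\psi(s)e_n\in X$, so that by \eqref{int} we have $\int_0^t\psi(s)\,dB^H(s)=\sum_{n=1}^\infty\int_0^t\psi_n(s)\,d\beta_n^H(s)$, a sum of $X$-valued random variables. For each fixed orthonormal vector $x\in X$ the scalar process $s\mapsto\langle\psi_n(s),x\rangle$ is square-integrable on $[0,T]$ since $\int_0^T\|\psi(s)\|_{\mathcal{L}_2^0}^2\,ds<\infty$ controls $\sum_n\int_0^T\|\psi_n(s)\|_X^2\,ds$; hence each $\int_0^t\psi_n(s)\,d\beta_n^H(s)$ is a well-defined $X$-valued Gaussian variable. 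Because the $\beta_n^H$ are independent, the summands are independent and centered, so the partial sums $\sum_{n=1}^N\int_0^t\psi_n(s)\,d\beta_n^H(s)$ form a martingale (in $N$) in $L^2(\Omega;X)$ with
\begin{equation*}
\mathbb{E}\Big\|\sum_{n=1}^N\int_0^t\psi_n(s)\,d\beta_n^H(s)\Big\|_X^2
=\sum_{n=1}^N\mathbb{E}\Big\|\int_0^t\psi_n(s)\,d\beta_n^H(s)\Big\|_X^2.
\end{equation*}
Expanding the $X$-norm along a complete orthonormal basis of $X$ and applying the scalar estimate coordinatewise gives, for each $n$,
\begin{equation*}
\mathbb{E}\Big\|\int_0^t\psi_n(s)\,d\beta_n^H(s)\Big\|_X^2
\le 2Ht^{2H-1}\int_0^t\|\psi_n(s)\|_X^2\,ds
=2Ht^{2H-1}\int_0^t\lambda_n\|\psi(s)e_n\|_X^2\,ds.
\end{equation*}
Summing over $n$ and using $\sum_n\lambda_n\|\psi(s)e_n\|_X^2=\|\psi(s)\|_{\mathcal{L}_2^0}^2$ bounds the $N$-th partial sum uniformly by $2Ht^{2H-1}\int_0^t\|\psi(s)\|_{\mathcal{L}_2^0}^2\,ds<\infty$. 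Therefore the series converges in $L^2(\Omega;X)$, defining $\int_0^t\psi(s)\,dB^H(s)$ as an $X$-valued random variable, and passing to the limit $N\to\infty$ in the displayed equality yields the claimed bound $\mathbb{E}\|\int_0^t\psi(s)\,dB^H(s)\|^2\le 2Ht^{2H-1}\int_0^t\|\psi(s)\|_{\mathcal{L}_2^0}^2\,ds$.

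The only genuinely delicate point is the scalar variance identity and the kernel inequality that turns the double integral $\int\!\int h(u)h(v)|u-v|^{2H-2}$ into $2Ht^{2H-1}\int h^2$; everything else is bookkeeping (orthogonality of the $\beta_n^H$, Parseval in both $X$ and $Y$, and a monotone-convergence/Cauchy argument for the series). Since this scalar estimate is precisely what is established in \cite[Lemma 2]{carab}, the lemma follows "by similar arguments" as the excerpt asserts, and I would present the componentwise reduction above as the substance of the proof.
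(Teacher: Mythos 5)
Your argument is correct and is essentially the proof the paper has in mind: the paper itself gives no details but defers to Lemma 2 of \cite{carab}, whose argument is exactly your reduction to the scalar variance identity $H(2H-1)\int_0^t\int_0^t h(u)h(v)|u-v|^{2H-2}\,du\,dv \le 2Ht^{2H-1}\int_0^t h(s)^2\,ds$ followed by summation over the independent components $\beta_n^H$ with Parseval in $X$ and $Y$. No discrepancies to report.
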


It is known that the study of theory of  differential equation with
infinite delays depends on a choice of the abstract phase space. We
assume that the phase space $\mathcal{B}_h$ is a linear space of
functions mapping $(-\infty ,0]$ into $X$, endowed with a norm
$\|.\|_{\mathcal{B}_h}$.  We shall introduce some basic definitions,
notations and lemma which are used in this paper. First, we present
the abstract phase space $\mathcal{B}_h$. Assume that $h:(-\infty
,0]\longrightarrow [0,+\infty)$  is a continuous function with
$l=\int_{-\infty}^0h(s)ds<+\infty$.

We define the abstract phase space $\mathcal{B}_h$ by
$$
\begin{array}{ll}
                    \mathcal{B}_h=   & \{\psi:(-\infty,0]\longrightarrow X \text{ for any } \tau>0, (\E\|\psi\|^2)^{\frac{1}{2}} \text{ is bounded and measurable }  \\
                       & \text{ function on } [-\tau,0] \text{ and } \int_{-\infty}^0h(t)\sup_{t\leq s\leq0}
                       (\E\|\psi(s)\|^2)^{\frac{1}{2}}dt<+\infty\}.
                    \end{array}
$$
If we equip this space with the norm

$$
\|\psi\|_{\mathcal{B}_h}:=\int_{-\infty}^0h(t)\sup_{t\leq
s\leq0}(\E\|\psi(s)\|^2)^{\frac{1}{2}}dt,
$$
then it is clear that $(\mathcal{B}_h,\|.\|_{\mathcal{B}_h})$  is a
Banach space.

Next, We  consider the space $\mathcal{B}_{T}$, given by

$$
    \mathcal{B}_{T} = \{ x: x\in \mathcal{C}((-\infty,T],X),  \text{  with  } \,  x_0=\varphi\in \mathcal{B}_h
    \},
$$
where $\mathcal{C}((-\infty,T],X)$ denotes the space of all
continuous $X-$valued stochastic processes $\{x(t),\;
t\in(-\infty,T]\}$.  The function $\|.\|_{\mathcal{B}_{T}}$ to be a
semi-norm in $\mathcal{B}_{T}$, it is defined by
$$
\|x\|_{\mathcal{B}_{T}}=\|x_0\|_{\mathcal{B}_h}+\sup_{0\leq t \leq
T}(\E\|x(t)\|^2)^{\frac{1}{2}}.
$$
The following lemma  is a common property of phase spaces.
\begin{lemma}\cite{li-li} Suppose $x\in \mathcal{B}_{T}$, then for all $t\in [0,T]$ ,  $x_t\in
\mathcal{B}_h$  and
$$
l(\E\|x(t)\|^2)^{\frac12}\leq\|x_{t}\|_{\mathcal{B}_h}\leq l\sup_{0
\leq s\leq t}(\E\|x(s)\|^2)^{\frac12}+ \|x_{0 }\|_{\mathcal{B}_h},
$$
where $l=\int_{-\infty}^0h(s)ds<\infty$.

\end{lemma}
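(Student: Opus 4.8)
The plan is to unwind the definition of $\|\cdot\|_{\mathcal{B}_h}$ on the history segment $x_t$ and reduce everything to a change of variables together with an elementary split of a supremum. Fix $t\in[0,T]$. Since $x_t(\theta)=x(t+\theta)$ for $\theta\le 0$, the substitution $u=t+\theta$ in the definition of the $\mathcal{B}_h$-norm gives
$$
\|x_t\|_{\mathcal{B}_h}=\int_{-\infty}^0 h(\tau)\sup_{\tau\le\theta\le 0}(\E\|x(t+\theta)\|^2)^{\frac12}\,d\tau=\int_{-\infty}^0 h(\tau)\sup_{t+\tau\le u\le t}(\E\|x(u)\|^2)^{\frac12}\,d\tau .
$$
For the lower bound, note that for every $\tau\le 0$ the endpoint $u=t$ belongs to $[t+\tau,t]$, so the inner supremum is at least $(\E\|x(t)\|^2)^{\frac12}$; multiplying by $h(\tau)\ge 0$, integrating over $(-\infty,0]$, and using $\int_{-\infty}^0 h(\tau)\,d\tau=l$ yields $l\,(\E\|x(t)\|^2)^{\frac12}\le\|x_t\|_{\mathcal{B}_h}$.

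For the upper bound I would split the interval $[t+\tau,t]$ at $0$. Because $t\ge 0$, we always have $[t+\tau,t]\subseteq[t+\tau,0]\cup[0,t]$ (with the first piece empty when $t+\tau>0$), hence
$$
\sup_{t+\tau\le u\le t}(\E\|x(u)\|^2)^{\frac12}\le \sup_{0\le s\le t}(\E\|x(s)\|^2)^{\frac12}+\sup_{t+\tau\le u\le 0}(\E\|x(u)\|^2)^{\frac12}.
$$
On $(-\infty,0]$ one has $x(u)=x_0(u)$, and since $t\ge 0$ implies $t+\tau\ge\tau$, the interval $[t+\tau,0]$ is contained in $[\tau,0]$, so the last supremum is at most $\sup_{\tau\le u\le 0}(\E\|x_0(u)\|^2)^{\frac12}$. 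Inserting this bound into the integral representation above and using $\int_{-\infty}^0 h(\tau)\,d\tau=l$ together with the definition of $\|x_0\|_{\mathcal{B}_h}$ gives
$$
\|x_t\|_{\mathcal{B}_h}\le l\,\sup_{0\le s\le t}(\E\|x(s)\|^2)^{\frac12}+\|x_0\|_{\mathcal{B}_h}.
$$

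Finally, $x_t\in\mathcal{B}_h$ follows at once from this last estimate: its right-hand side is finite because $x_0=\varphi\in\mathcal{B}_h$ and $\sup_{0\le s\le T}(\E\|x(s)\|^2)^{\frac12}<\infty$ by the definition of $\mathcal{B}_T$, while the measurability and local boundedness on compact subintervals of $(-\infty,0]$ of $\theta\mapsto(\E\|x_t(\theta)\|^2)^{\frac12}$ are inherited from those of the process $x$ on $(-\infty,T]$. There is no genuine difficulty here; the only step requiring a little care is the bookkeeping in the supremum split — handling the case $t+\tau\ge 0$ correctly and charging the negative-time part to $\|x_0\|_{\mathcal{B}_h}=\|\varphi\|_{\mathcal{B}_h}$ rather than double-counting it — which is precisely why the bound is stated as a sum over the two ranges.
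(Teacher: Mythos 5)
Your argument is correct: the change of variables $u=t+\theta$, the endpoint evaluation for the lower bound, and the split of the supremum at $0$ (charging the negative-time part to $\|x_0\|_{\mathcal{B}_h}$ via $[t+\tau,0]\subseteq[\tau,0]$) are exactly the standard proof of this phase-space inequality. The paper itself gives no proof — it only cites the lemma from the reference \cite{li-li} — so there is nothing to compare against beyond noting that your derivation is the usual one and is sound.
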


Let us give the following well-known definitions related to
fractional order differentiation and integration.
\begin{definition}
The Riemann-Liouville fractional integral of order $\alpha>0$ of a
function $f:\R^+\longrightarrow X$ is defined by
$$
J^\alpha_tf(t)=\frac{1}{\Gamma(\alpha)}\int_0^t\frac{f(s)}{(t-s)^{1-\alpha}}ds,
$$
where $\Gamma(.)$ is the Gamma function.
\end{definition}

\begin{definition}
The Riemann-Liouville fractional derivative of order
$\alpha\in(0,1)$ of a function $f:\R^+\longrightarrow X$ is defined
by
$$
D^\alpha_tf(t)=\frac{d}{dt}J^{1-\alpha}_tf(t).
$$
\end{definition}
\begin{definition} The Caputo fractional derivative of order
$\alpha\in(0,1)$ of $f: \R^+\longrightarrow X$ is defined by
$$
^CD^\alpha_tf(t)=D^\alpha_t(f(t)-f(0)).
$$

\end{definition}
For more details on fractional calculus, one can see \cite{kil06}.

 We suppose  $0 \in \rho(A)$, the resolvent set of $A$, and the semigroup, $(S
(t))_{t\geq 0}$, is uniformly  bounded. That is,  there exists $M
\geq 1$  such that  $\|S (t)\|\leq M $ for every $t \geq 0$. Then it
is possible to define the fractional power $(-A)^{\alpha}$ for $0 <
\alpha \leq 1$, as a closed linear operator on its domain
$D(-A)^{\alpha}$. Furthermore, the subspace $D(-A)^{\alpha}$ is
dense in $X$, and the expression
$$\|h\|_{\alpha} =\|(-A)^{\alpha}h\|$$ defines a norm in
$D(-A)^{\alpha}$. If $X_{\alpha}$ represents the space
$D(-A)^{\alpha}$ endowed with the norm $\|.\|_{\alpha}$, then the
following properties hold (see \cite{pazy}, p. 74).
\begin{lemma}\label{lem3} Suppose that $A, X_{\alpha},$ and $(-A)^{\alpha}$ are as described above.
\begin{itemize}
\item[(i)] For \,$0<\alpha \leq  1$,  $X_{\alpha}$ is a Banach space.
\item[(ii)] If\,  $0 <\beta \leq \alpha, $ then the injection $X_{\alpha}
\hookrightarrow X_{\beta}$ is continuous.
 \item[(iii)] For every \,$0<\alpha \leq 1,$ there exists $M_{\alpha} > 0 $ such that
 $$\| (-A)^{\alpha}S (t)\|\leq M_{\alpha}t^{-\alpha}e^{-\lambda t}
 , \;\;\;\;t>0,\;\; \lambda>0 .$$
 \end{itemize}
 \end{lemma}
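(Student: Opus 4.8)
The three assertions are classical properties of fractional powers of the generator of an analytic semigroup, and the plan is to derive each of them from the Balakrishnan representation of $(-A)^{-\alpha}$ together with the analyticity of $(S(t))_{t\ge 0}$, following \cite{pazy}. First I would note that, since $(S(t))_{t\ge 0}$ is analytic, uniformly bounded, and $0 \in \rho(A)$, a standard argument (the spectrum of $A$ lies in a sector opening into the right half-plane and, being closed and missing $0$, is bounded away from $i\R$) gives $\|S(t)\| \le M e^{-\lambda t}$ for some $\lambda > 0$. This exponential stability lets one define, for $0 < \alpha \le 1$,
\[
(-A)^{-\alpha} = \frac{1}{\Gamma(\alpha)}\int_0^\infty t^{\alpha-1} S(t)\, dt,
\]
a bounded, injective operator on $X$, and then set $(-A)^\alpha := \big((-A)^{-\alpha}\big)^{-1}$ with $X_\alpha = D((-A)^\alpha) = \mathrm{Range}\big((-A)^{-\alpha}\big)$.

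For (i): since $(-A)^{-\alpha}$ is a bounded bijection of $X$ onto $X_\alpha$ and $\|h\|_\alpha = \|(-A)^\alpha h\|$ is precisely the $X$-norm transported through this bijection, $(X_\alpha,\|\cdot\|_\alpha)$ is isometrically isomorphic to $X$, hence a Banach space. Equivalently, $(-A)^\alpha$ is closed with bounded inverse, so $X_\alpha$ is complete for the graph norm, which is equivalent to $\|\cdot\|_\alpha$ because $\|h\| \le \|(-A)^{-\alpha}\|\,\|h\|_\alpha$.

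For (ii): for $0 < \beta \le \alpha$, write $h = (-A)^{-\alpha}g$ with $g = (-A)^\alpha h$; the composition law $(-A)^\beta(-A)^{-\alpha} = (-A)^{-(\alpha-\beta)}$ (additivity of fractional powers, \cite{pazy}), with $(-A)^{-(\alpha-\beta)}$ bounded since $0 \le \alpha-\beta < 1$, shows $(-A)^\beta h = (-A)^{-(\alpha-\beta)}g$, so $h \in X_\beta$ and $\|h\|_\beta \le \|(-A)^{-(\alpha-\beta)}\|\,\|h\|_\alpha$; thus the injection $X_\alpha \hookrightarrow X_\beta$ is continuous.

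For (iii): by analyticity, $S(t)X \subseteq D(A) \subseteq X_\alpha$ for every $t > 0$, and one has the classical bounds $\|S(t)\| \le M e^{-\lambda t}$ and $\|A S(t)\| \le C_1 t^{-1} e^{-\lambda t}$. Applying the moment (interpolation) inequality $\|(-A)^\alpha y\| \le C\,\|A y\|^\alpha \|y\|^{1-\alpha}$ to $y = S(t)x$ and combining the two bounds yields $\|(-A)^\alpha S(t)\| \le M_\alpha t^{-\alpha} e^{-\lambda t}$; alternatively one inserts the contour representation of $(-A)^\alpha$ and estimates directly. I expect (iii) to be the only real point of substance: extracting the sharp singularity exponent $t^{-\alpha}$ together with the exponential decay genuinely uses analyticity of the semigroup, whereas (i) and (ii) are formal consequences of the functional calculus for $(-A)^{-\alpha}$.
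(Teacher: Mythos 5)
Your argument is correct and matches the source the paper relies on: the paper gives no proof of this lemma, simply citing \cite{pazy} (p.~74), and your reconstruction via the Balakrishnan representation, the isometry $(-A)^\alpha : X_\alpha \to X$ for (i), additivity of fractional powers for (ii), and the moment inequality combined with $\|AS(t)\|\le C_1 t^{-1}e^{-\lambda t}$ for (iii) is exactly the classical route taken there. No gaps; the only point worth making explicit is the deduction of $\|S(t)\|\le Me^{-\lambda t}$ from $0\in\rho(A)$ and analyticity, which you correctly flag.
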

\section{Controllability Result}

  Before starting and proving our main result, we introduce the concepts
of a mild solution of the problem (\ref{eq1})  and the meaning of
controllability of fractional  neutral stochastic functional
differential equation.

\begin{definition}
An $X$-valued  process $\{x(t) : t\in(-\infty,T]\}$ is a mild
solution of (\ref{eq1}) if

\begin{enumerate}
\item  $x(t)$ is continuous on $[0,T]$ almost surely and for each $s\in[0,t)$ and $\alpha\in (0,1)$ the function  $(t-s)^{\alpha-1}AS_\alpha(t-s)g(s,x_s)$ is
 integrable,
\item for arbitrary $t \in [0,T]$, we have

\begin{equation}\label{eqmild2}
\begin{array}{ll}
x(t)&=T_\alpha(t)(\varphi(0)-g(0,\varphi ))+g(t,x_t)\\ \\
&+ \int_0^t (t-s)^{\alpha-1}AS_\alpha(t-s)g(s,x_s)ds +\int_0^t(t-s)^{\alpha-1} S_\alpha(t-s)f(s,x_s)ds\\ \\
&+\int_0^t (t-s)^{\alpha-1}S_\alpha(t-s)Bu(s)ds+\int_0^t
(t-s)^{\alpha-1}S_\alpha(t-s)\sigma(s)dB^H(s),\; \mathbb{P}-a.s.
\end{array}
\end{equation}
\item $x(t)=\varphi(t)$ on $(-\infty,0]$ satisfying $\|\varphi\|_{\mathcal{B}_h}^2<\infty,$
\end{enumerate}
where
$$
T_\alpha(t)x=\int_0^\infty\eta_\alpha(\theta)S(t^\alpha\theta)xd\theta,\;t\geq0,\;
x\in X.
$$
$$
S_\alpha(t)x=\alpha\int_0^\infty\theta\eta_\alpha(\theta)S(t^\alpha\theta)xd\theta,\;t\geq0,\;
x\in X,
$$
where

$$
\eta_\alpha(\theta)=\frac{1}{\alpha}\theta^{-1-\frac{1}{\alpha}}\omega_\alpha(\theta^{-\frac{1}{\alpha}})\geq0,
$$

$$
\omega_\alpha(\theta)=\frac{1}{\pi}\sum_{n=1}^\infty(-1)^{n-1}\theta^{-\alpha
n-1}\frac{\Gamma(n\alpha+1)}{n!}\sin(n\alpha\pi),\quad
\theta\in]0,\infty[,
$$

 $\eta_\alpha$ is a probability density function defined on
$(0,\infty)$.

\end{definition}
\begin{remark} (see \cite{zho10})

\begin{equation}\label{prob1}
\int_0^\infty
\theta\eta_\alpha(\theta)d\theta=\frac{1}{\Gamma(1+\alpha)}.
\end{equation}
\end{remark}
The following properties of $T_\alpha$ and $S_\alpha$ appeared in
\cite{zho10} are useful.
\begin{lemma}\label{operators}
Under the previous assumptions on $S(t)$, $t\geq0$ and $A$,   the
operators  $T_\alpha(t)$ and $S_\alpha(t)$ have the following
properties:
\begin{itemize}
  \item [(i)] For any $x\in X$, $\|T_\alpha(t)x\|\leq M\|x\|$,  $\|S_\alpha(t)x\|\leq \frac{M}{\Gamma(\alpha)}\|x\|$.
  \item [(ii)] $\{T_\alpha(t),\;t\geq0\}$ and
  $\{S_\alpha(t),\;t\geq0\}$ are strongly continuous.
  \item [(iii)] For any $t>0$, \, $T_\alpha(t)$ and $S_\alpha(t)$ are
  also compact operators if $S(t)$ is compact.
  \item [(iv)] For any $x\in X$,  $\beta\in(0,1)$ and $\delta\in(0,1],$ we have
  $$AS_\alpha(t)x=A^{1-\beta}S_\alpha A^\beta x, \text{ and }
  \|A^\delta S_\alpha(t)\|\leq\frac{\alpha M_{\delta}}{t^{\alpha
  \delta}} \frac{\Gamma(2-\delta)}{\Gamma(1+\alpha(1-\delta))},\; t\in (0,T].
  $$
\end{itemize}

\end{lemma}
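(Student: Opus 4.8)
The plan is to reduce every assertion to two ingredients only: the uniform bound $\|S(t)\|\le M$ together with Lemma~\ref{lem3}, and the moment identities for the subordination density $\eta_\alpha$, namely $\int_0^\infty\theta^\nu\eta_\alpha(\theta)\,d\theta=\Gamma(1+\nu)/\Gamma(1+\alpha\nu)$ for $\nu\ge0$, of which \eqref{prob1} is the case $\nu=1$. Throughout, the integrals defining $T_\alpha(t)x$ and $S_\alpha(t)x$ are Bochner integrals whose integrands are strongly continuous and dominated by $M\|x\|\,\eta_\alpha(\theta)$ and $\alpha M\|x\|\,\theta\eta_\alpha(\theta)$ respectively; the first is integrable because $\eta_\alpha$ is a probability density and the second by \eqref{prob1}, so all interchanges of limits and integrals below are legitimized by dominated convergence.

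For (i), estimating under the integral sign gives $\|T_\alpha(t)x\|\le M\|x\|\int_0^\infty\eta_\alpha(\theta)\,d\theta=M\|x\|$ and $\|S_\alpha(t)x\|\le\alpha M\|x\|\int_0^\infty\theta\eta_\alpha(\theta)\,d\theta=\alpha M\|x\|/\Gamma(1+\alpha)=(M/\Gamma(\alpha))\|x\|$, using \eqref{prob1} and $\Gamma(1+\alpha)=\alpha\Gamma(\alpha)$. For (ii), fix $t_0\ge0$ and $x\in X$; since $s\mapsto S(s)x$ is continuous and $\|S(t^\alpha\theta)x-S(t_0^\alpha\theta)x\|\le2M\|x\|$ with $\eta_\alpha$ integrable, dominated convergence yields $T_\alpha(t)x\to T_\alpha(t_0)x$ as $t\to t_0$, and the same argument with the dominating function $2\alpha M\|x\|\,\theta\eta_\alpha(\theta)$ gives strong continuity of $S_\alpha(t)$.

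For (iii), suppose $S(t)$ is compact for $t>0$. For $\varepsilon\in(0,1)$ set $T_\alpha^\varepsilon(t)x=\int_\varepsilon^\infty\eta_\alpha(\theta)S(t^\alpha\theta)x\,d\theta=S(\varepsilon t^\alpha)\int_\varepsilon^\infty\eta_\alpha(\theta)S((\theta-\varepsilon)t^\alpha)x\,d\theta$, using the semigroup law and pulling the bounded operator $S(\varepsilon t^\alpha)$ out of the integral; since $S(\varepsilon t^\alpha)$ is compact and the remaining operator is bounded, $T_\alpha^\varepsilon(t)$ is compact, while $\|T_\alpha(t)-T_\alpha^\varepsilon(t)\|\le M\int_0^\varepsilon\eta_\alpha(\theta)\,d\theta\to0$ as $\varepsilon\downarrow0$. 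Hence $T_\alpha(t)$ is compact as a uniform limit of compact operators, and the identical argument with the weight $\alpha\theta\eta_\alpha(\theta)$ handles $S_\alpha(t)$.

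For (iv), the algebraic identity $AS_\alpha(t)x=A^{1-\beta}S_\alpha(t)A^\beta x$ is the standard commutation relation between $S(t)$ and the fractional powers of $A$ (see \cite{pazy}): $S_\alpha(t)$ maps into $D(A)$ for $t>0$, $A=A^{1-\beta}A^\beta$, $A^\beta$ commutes with $S(s)$, and $A^{1-\beta}$ is closed, so one may carry these through the Bochner integral. For the bound, moving $A^\delta$ inside the integral and using Lemma~\ref{lem3}(iii) with $\lambda\downarrow0$ gives $\|A^\delta S_\alpha(t)x\|\le\alpha\int_0^\infty\theta\eta_\alpha(\theta)\,M_\delta(t^\alpha\theta)^{-\delta}\|x\|\,d\theta=\alpha M_\delta t^{-\alpha\delta}\|x\|\int_0^\infty\theta^{1-\delta}\eta_\alpha(\theta)\,d\theta$, and the moment identity with $\nu=1-\delta$ converts the last integral into $\Gamma(2-\delta)/\Gamma(1+\alpha(1-\delta))$, which is exactly the claimed estimate. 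The only step that is not purely routine is that moment identity $\int_0^\infty\theta^\nu\eta_\alpha(\theta)\,d\theta=\Gamma(1+\nu)/\Gamma(1+\alpha\nu)$; I would either cite it from \cite{zho10}, from which this lemma is taken, or recover it from the fact that $\eta_\alpha$ is the density with Laplace transform $\sum_{k\ge0}(-z)^k/\Gamma(1+\alpha k)$ by differentiating that transform under the integral sign. All the remaining work consists of the interchanges of limits already justified above and the bounds recorded in Lemma~\ref{lem3}.
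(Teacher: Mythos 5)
The paper itself gives no proof of this lemma: it is imported verbatim from Zhou--Feng \cite{zho10} with the remark that the properties ``appeared in \cite{zho10}''. Your argument is correct and is, in substance, the standard proof from that reference: (i) and (ii) follow by estimating and passing to the limit under the subordination integrals, using that $\eta_\alpha$ is a probability density together with \eqref{prob1} and $\Gamma(1+\alpha)=\alpha\Gamma(\alpha)$; your proof of (iii) via the cutoff $\int_\delta^\infty$, factoring out the compact operator $S(\varepsilon t^\alpha)$ by the semigroup law and taking a uniform limit of compact operators, is exactly the device used in \cite{zho10}; and (iv) reduces, as you say, to Lemma \ref{lem3}(iii) plus the moment identity $\int_0^\infty\theta^{\nu}\eta_\alpha(\theta)\,d\theta=\Gamma(1+\nu)/\Gamma(1+\alpha\nu)$. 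You are right to single out that identity as the only genuinely non-routine ingredient; it is indeed proved in \cite{zho10} from the Laplace/Mellin transform of the Wright-type density, so citing it or rederiving it as you propose both close the argument. Two small points worth recording: in (iv) the identity $AS_\alpha(t)x=A^{1-\beta}S_\alpha(t)A^{\beta}x$ as literally stated requires $x\in D((-A)^{\beta})$ (the paper's ``for any $x\in X$'' is loose, and the fractional powers throughout are really $(-A)^{\delta}$, $(-A)^{\beta}$ in the sign convention of Lemma \ref{lem3}); and in Lemma \ref{lem3}(iii) you simply bound $e^{-\lambda t}\le 1$ rather than ``let $\lambda\downarrow0$'', which is all that is needed. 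Neither affects the validity of your proof.
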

\begin{definition}
The fractional  neutral  stochastic functional differential equation
(\ref{eq1}) is said to be controllable on the interval $(-\infty,T]$
if for every initial stochastic process $\varphi$ defined on
$(-\infty,0]$, there exists a stochastic control $u\in L^2([0,T],
U)$ such that the mild solution $x(\cdot)$ of  (\ref{eq1}) satisfies
$x(T)=x_1$, where $x_1$ and $T$ are the preassigned terminal state
and time, respectively.
\end{definition}

Our main result in this paper is based on the following fixed point
theorem.
\begin{theorem}(Karasnoselskii's fixed point theorem)
Let $V$ be a bounded closed and convex subset of  a Banach space $X$
 and  let $\Pi_1$, $\Pi_2$ be two operators of $V$
 into $X$ satisfying:
\begin{enumerate}
\item  $\Pi_1(x)+\Pi_2(x)\in V$ whenever $x\in V$,

  \item  $\Pi_1$ is a contraction mapping, and
  \item $\Pi_2$ is  completely continuous.
\end{enumerate}
Then,  there exists a $z \in V$ such that $z=\Pi_1(z)+\Pi_2(z)$.
\end{theorem}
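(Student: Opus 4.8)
The plan is to reduce this hybrid of a contraction and a compact map to a single application of Schauder's fixed point theorem, by ``inverting'' the contraction part. Since $\Pi_1$ is a contraction on the Banach space $X$, say with Lipschitz constant $k<1$, for each fixed $y\in V$ the map $x\mapsto \Pi_1(x)+\Pi_2(y)$ is again a $k$-contraction on all of $X$; by the Banach contraction principle it admits a unique fixed point, which I denote $\Phi(y)$, so that $\Phi(y)=\Pi_1(\Phi(y))+\Pi_2(y)$. Using hypothesis (1) (understood in the usual form $\Pi_1(x)+\Pi_2(y)\in V$ for all $x,y\in V$) together with the closedness of $V$ and the fact that $\Phi(y)$ is the limit of the Picard iterates $x_{m+1}=\Pi_1(x_m)+\Pi_2(y)$ started at any $x_0\in V$, one checks that $\Phi$ is a well-defined map of $V$ into $V$.

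Next I would show $\Phi : V\to V$ is continuous. Subtracting the defining identities for $\Phi(y)$ and $\Phi(y')$ and using the contraction property of $\Pi_1$ gives $\|\Phi(y)-\Phi(y')\|\le k\|\Phi(y)-\Phi(y')\|+\|\Pi_2(y)-\Pi_2(y')\|$, whence $\|\Phi(y)-\Phi(y')\|\le (1-k)^{-1}\|\Pi_2(y)-\Pi_2(y')\|$. Continuity of $\Pi_2$ then forces continuity of $\Phi$; crucially, this same estimate is what transfers compactness from $\Pi_2$ to $\Phi$.

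Then I would prove that $\Phi(V)$ is relatively compact. Given any sequence $\{y_n\}\subset V$, complete continuity of $\Pi_2$ provides a subsequence along which $\{\Pi_2(y_n)\}$ converges, hence is Cauchy; by the inequality of the previous step $\{\Phi(y_n)\}$ is Cauchy along that same subsequence, so it converges in $X$, and the limit lies in $V$ because $V$ is closed. Thus $\overline{\Phi(V)}$ is compact. Finally, $V$ is a nonempty bounded closed convex subset of $X$ and $\Phi$ is a continuous self-map of $V$ with relatively compact range, so Schauder's fixed point theorem yields $z\in V$ with $z=\Phi(z)$, i.e. $z=\Pi_1(z)+\Pi_2(z)$, which is the assertion.

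The step I expect to require the most care is the very first one: making sure the auxiliary operator $\Phi$ genuinely maps $V$ back into $V$. This is exactly where the precise form of hypothesis (1) matters — the classical statement asks for $\Pi_1(x)+\Pi_2(y)\in V$ for \emph{all} $x,y\in V$, not merely for $x=y$ — so one should either read (1) in that stronger form or supply the extra argument, via the Picard iteration inside the closed set $V$, that pins $\Phi(y)$ inside $V$. Once $\Phi$ is known to be a continuous self-map of $V$ with precompact image, the remainder is just the contraction estimate together with the invocation of Schauder's theorem.
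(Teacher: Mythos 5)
The paper states Krasnoselskii's theorem as a known tool and supplies no proof of it, so there is no argument of the author's to compare yours against; I can only assess your proposal on its own terms. What you write is the standard proof — solve $x=\Pi_1(x)+\Pi_2(y)$ by the Banach contraction principle to build $\Phi=(I-\Pi_1)^{-1}\circ\Pi_2$, transfer continuity and compactness from $\Pi_2$ to $\Phi$ via the estimate $\|\Phi(y)-\Phi(y')\|\le (1-k)^{-1}\|\Pi_2(y)-\Pi_2(y')\|$, and invoke Schauder — and it is correct under the classical reading of hypothesis (1), namely $\Pi_1(x)+\Pi_2(y)\in V$ for all $x,y\in V$ (together with the tacit assumption that $V$ is nonempty). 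You are right that this is the delicate point, but your proposed fallback does not actually repair the weaker, diagonal form of (1) as literally written: if one only knows $\Pi_1(x)+\Pi_2(x)\in V$, then already the first Picard iterate $x_1=\Pi_1(x_0)+\Pi_2(y)$ with $x_0\neq y$ need not lie in $V$, and since $\Pi_1$ is defined only on $V$ the iteration (and the contraction principle itself) cannot proceed. So the strong form of (1) is genuinely required for your argument; it is how the theorem is classically stated, and it is worth noting that the paper's own application (Step 1 of Theorem 3.7) verifies only the diagonal form, a common imprecision in this literature. With the strong form assumed, every step of your proof goes through.
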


 In order to establish the controllability of
(\ref{eq1}), we impose the following conditions on the data of the
problem:

\begin{itemize}
\item [$(\mathcal{H}.1)$]
 The  analytic semigroup, $(S (t))_{t\geq
0}$,  generated by $A$  is compact for $t>0$, and    there exists
$M\geq1$ such that
$$ \sup_{t\geq0}\| S (t)\|\leq M, \qquad \text{ and }   c_1= \|(-A)^{-\beta}\|.
$$
\item [$(\mathcal{H}.2)$] The map $f:[0,T]\times \mathcal{B}_h\to X$
satisfies the following conditions:
\begin{enumerate}
  \item[(i)] The function  $t\longmapsto f(t,x)$ is measurable for each $x\in
  \mathcal{B}_h$,  the function
   $x\longmapsto f(t,x)$ is continuous for almost all $t\in
  [0,T]$,
  \item[(ii)]there exists a nonnegative function $p\in
  L^1([0,T],\R^+)$, and a continuous nondecreasing function $\vartheta: \R^+\longrightarrow (0,+\infty)$ such
  that for $\delta>\frac{1}{2\alpha-1}$, \; $(\alpha\in(\frac12,1)),$
  $$
\int_0^T(\vartheta(s))^\delta ds<\infty,\qquad
\liminf_{k\longrightarrow+\infty}\frac{\vartheta(k)}{k}=\gamma<\infty,
  $$
  and
  $$
\|f(t,x)\|^2\leq p(t)\vartheta(\|x\|_{\mathcal{B}_h}^2), \,\text{
for all }  x\in \mathcal{B}_h\, , \,\text{ almost surely and for
a.e. }t\in[0,T].
  $$

\end{enumerate}

\item [$(\mathcal{H}.3)$] The function $g:[0,T]\times\mathcal{B}_h\longrightarrow X$ is
continuous. For   $\beta\in(0, 1), $ satisfied with
$\alpha\beta>\frac{1}{2}$,\;  the function $g$ is $X_{\beta}$-valued
and there exists positive  constant   $M_g,$ such
 that
 $$\|(-A)^{\beta}g(t,x)-(-A)^{\beta}g(t,y)\|^2 \leq M_g \|x-y\|^2_{\mathcal{B}_{h}}, \,\text{
for all }  x\in \mathcal{B}_h\, , \,\text{ almost surely and for
a.e. }t\in[0,T],
 $$

$$
\|(-A)^{\beta}g(t,x)\|^2\leq M_g[\|x\|^2_{\mathcal{B}_{h}}+1],
\,\text{ for all }  x\in \mathcal{B}_h\, , \,\text{ almost surely
and for a.e. }t\in[0,T].
$$

\item [$(\mathcal{H}.4)$] There exists a constant $p>\frac{1}{2\alpha-1}$ such that  the function $\sigma:[0,\infty)\rightarrow \mathcal{L}_2^0(Y,X)$
satisfies
 $$\int_0^T\|\sigma(s)\|^{2p}_{\mathcal{L}_2^0}ds< \infty,\;\; \forall T>0 .
 $$
 \item [$(\mathcal{H}.5)$] The linear operator $W$ from $U$ into $X$
 defined by
 $$
Wu=\int_0^T(T-s)^{\alpha-1}S_\alpha(T-s)Bu(s)ds
 $$
 has an inverse operator $W^{-1}$ that takes values in $L^2([0,T],U)\setminus ker
 W$, where $$
 ker
 W=\{x\in L^2([0,T],U):  \; W x=0\}
 $$ (see \cite{klam07}), and there
 exists finite
 positive constants $M_b,$ $M_w$ such that $\|B\|^2\leq M_b$ and $\|W^{-1}\|^2\leq M_w.$
\item [$(\mathcal{H}.6)$] Assume the following inequality holds:
{\small{
 \begin{equation}\label{cond1}
 \begin{array}{ll}
    &  24l^2 \{[c_1^2+\frac{
   T^{2\alpha\beta}\alpha^2M^2_{1-\beta}\Gamma^2(\beta+1)}{(2\alpha\beta-1)\Gamma^2(\alpha\beta+1)}]M_g+\gamma(1+\frac{6M^2M_bM_wT^{2\alpha}}{(2\alpha-1)\Gamma^2(\alpha)})
\frac{M^2T}{\Gamma^2(\alpha)}\int_0^T(T-s)^{2\alpha-2}p(s)
ds\\\\
&+\frac{6M^2M_bM_wT^{2\alpha}}{(2\alpha-1)\Gamma^2(\alpha)}[c_1^2
+\frac{\alpha^2M_{1-\beta}^2T^{2\alpha\beta}\Gamma^2(\beta+1)}{(2\alpha\beta-1)\Gamma^2(\alpha\beta+1)}]M_g\}<1.
 \end{array}
\end{equation}}}

\end{itemize}

The main result of this chapter is the following.
\begin{theorem}\label{th1}
Suppose that $(\mathcal{H}.1)-(\mathcal{H}.6)$ hold. Then,  the
system  (\ref{eq1}) is controllable  on $(-\infty,T]$.

\end{theorem}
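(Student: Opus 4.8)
\medskip
\noindent\textbf{Proof proposal.}\quad The plan is to rewrite the integral identity (\ref{eqmild2}) together with the terminal constraint $x(T)=x_1$ as a fixed‑point equation and to apply Krasnoselskii's theorem. First I would remove the history datum: put $y_0=\varphi$ and $y(t)=T_\alpha(t)(\varphi(0)-g(0,\varphi))$ for $t\in[0,T]$, so that any mild solution is of the form $x=y+z$ with $z$ in the Banach space $\mathcal{B}^0_T=\{z\in\mathcal{B}_T:\ z_0=0\}$ equipped with $\|z\|_{\mathcal{B}^0_T}=\sup_{0\le t\le T}(\E\|z(t)\|^2)^{1/2}$. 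Imposing $x(T)=x_1$ and using $(\mathcal{H}.5)$ to invert $W$ forces the feedback control
\[
\begin{aligned}
u^z(s)=W^{-1}\Big[&x_1-T_\alpha(T)(\varphi(0)-g(0,\varphi))-g(T,y_T+z_T)\\
&-\int_0^T(T-\tau)^{\alpha-1}AS_\alpha(T-\tau)g(\tau,y_\tau+z_\tau)\,d\tau\\
&-\int_0^T(T-\tau)^{\alpha-1}S_\alpha(T-\tau)f(\tau,y_\tau+z_\tau)\,d\tau\\
&-\int_0^T(T-\tau)^{\alpha-1}S_\alpha(T-\tau)\sigma(\tau)\,dB^H(\tau)\Big](s),
\end{aligned}
\]
and then $x=y+z$ is a controllable mild solution precisely when $z=\Pi_1 z+\Pi_2 z$, where on $[0,T]$ I would set
\[
\Pi_1 z(t)=g(t,y_t+z_t)+\int_0^t(t-s)^{\alpha-1}AS_\alpha(t-s)g(s,y_s+z_s)\,ds,
\]
\[
\begin{aligned}
\Pi_2 z(t)=&\int_0^t(t-s)^{\alpha-1}S_\alpha(t-s)\big[f(s,y_s+z_s)+Bu^z(s)\big]\,ds\\
&+\int_0^t(t-s)^{\alpha-1}S_\alpha(t-s)\sigma(s)\,dB^H(s),
\end{aligned}
\]
and $\Pi_1 z(t)=\Pi_2 z(t)=0$ on $(-\infty,0]$. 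Thus $\Pi_1$ carries the neutral (Lipschitz) terms and $\Pi_2$ the rest.

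Next I would verify the three hypotheses of Krasnoselskii's theorem on the closed ball $V=\{z\in\mathcal{B}^0_T:\ \|z\|^2_{\mathcal{B}^0_T}\le q\}$ for a suitable $q>0$. For the self‑mapping property $\Pi_1 V+\Pi_2 V\subset V$, I would estimate $\E\|\Pi_1 z(t)+\Pi_2 z(t)\|^2$ using Lemma \ref{operators}(i),(iv) (the bounds on $\|S_\alpha(t)\|$ and $\|A^{1-\beta}S_\alpha(t)\|$), Lemma \ref{lem3}(iii), Lemma \ref{lem2} for the Wiener integral, Hölder's inequality (the exponent conditions $\delta>1/(2\alpha-1)$ and $p>1/(2\alpha-1)$ in $(\mathcal{H}.2)$, $(\mathcal{H}.4)$ being exactly what keeps $(t-s)^{(2\alpha-2)\delta'}$ and $(t-s)^{(2\alpha-2)p'}$ integrable), the growth bounds in $(\mathcal{H}.2)$(ii), $(\mathcal{H}.3)$, $(\mathcal{H}.5)$, and Lemma 2.2 to bound $\|y_s+z_s\|^2_{\mathcal{B}_h}$ by a constant plus $2l^2q$. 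This gives $\E\|\Pi_1 z(t)+\Pi_2 z(t)\|^2\le C_0+C_1\vartheta(C_2+2l^2q)+C_3 q$; if no $q$ worked, choosing $q_n\to\infty$ along which $\vartheta(C_2+2l^2q_n)/(C_2+2l^2q_n)\to\gamma$ and dividing by $q_n$ would show the limit of the $q$‑dependent part is dominated by the left‑hand side of (\ref{cond1}), hence $1\le(\text{LHS of }(\ref{cond1}))<1$ by $(\mathcal{H}.6)$, a contradiction. For the contraction property of $\Pi_1$, the Lipschitz bound on $(-A)^\beta g$ in $(\mathcal{H}.3)$ together with $\|(-A)^{-\beta}\|^2=c_1^2$, Lemma \ref{operators}(iv) and Lemma 2.2 ($\|z_t-\bar z_t\|_{\mathcal{B}_h}\le l\sup_{0\le s\le t}(\E\|z(s)-\bar z(s)\|^2)^{1/2}$) yield $\|\Pi_1 z-\Pi_1\bar z\|^2_{\mathcal{B}^0_T}\le L\|z-\bar z\|^2_{\mathcal{B}^0_T}$ with $L$ bounded by the $M_g$‑terms of (\ref{cond1}), hence $L<1$.

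The third hypothesis, that $\Pi_2$ is completely continuous, is the crux. Continuity of $z\mapsto\Pi_2 z$ would follow from $(\mathcal{H}.2)$(i), continuity of $g$, continuity of $z\mapsto u^z$, and dominated convergence, while uniform boundedness of $\Pi_2(V)$ is contained in the previous step. To obtain equicontinuity of $\Pi_2(V)$ in $t\in[0,T]$ and relative compactness of $\{\Pi_2 z(t):z\in V\}$ for each fixed $t$, I would exploit the compactness of $S(\tau)$ for $\tau>0$ inherited by $T_\alpha$ and $S_\alpha$ (Lemma \ref{operators}(iii)): split $\int_0^t=\int_0^{t-\epsilon}+\int_{t-\epsilon}^t$, bound the near‑diagonal piece by $O(\epsilon^{2\alpha-1})$ uniformly over $V$ via Hölder and $(\mathcal{H}.2)$--$(\mathcal{H}.5)$, and on $[0,t-\epsilon]$ use strong continuity of $S_\alpha$ for the time‑increment estimate and the representation $S_\alpha(\tau)=\alpha\int_0^\infty\theta\eta_\alpha(\theta)S(\tau^\alpha\theta)\,d\theta$ (truncating the $\theta$‑integral away from $0$ and factoring out a compact $S(\cdot)$) for the compactness at fixed $t$. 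An Arzel\`a--Ascoli argument, transferred to $\mathcal{B}^0_T$ through Lemma 2.2, then makes $\Pi_2(V)$ relatively compact. Krasnoselskii's theorem then provides $z\in V$ with $z=\Pi_1 z+\Pi_2 z$; setting $x=y+z$ gives a mild solution of (\ref{eq1}) under the control $u^z\in L^2([0,T],U)$, and by the choice of $u^z$ and the definition of $W$ one checks $x(T)=x_1$. Since $\varphi$ was arbitrary, (\ref{eq1}) is controllable on $(-\infty,T]$.

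The hard part will be this last step. Because the kernel $(t-s)^{\alpha-1}$ is singular at $s=t$ while $S_\alpha(\tau)$ is compact only for $\tau>0$, the $\epsilon$‑truncation near the diagonal must be balanced against the Hölder estimates that the sharp exponents $\delta,p>1/(2\alpha-1)$ are tailored to support; and since the ambient metric is the mean‑square one on $\mathcal{B}^0_T$ rather than the sup‑norm on $C([0,T];X)$, every compactness and equicontinuity step has to be routed through the phase‑space inequality of Lemma 2.2 to pass between $\|x_t\|_{\mathcal{B}_h}$ and $\sup_s(\E\|x(s)\|^2)^{1/2}$. A secondary technical point is that $g$ re‑enters the ``compact'' operator $\Pi_2$ through the feedback $u^z$, so the neutral Lipschitz data must be tracked there as well — which is precisely why (\ref{cond1}) contains the $M_g$‑terms multiplied by the control factor $6M^2M_bM_wT^{2\alpha}/((2\alpha-1)\Gamma^2(\alpha))$.
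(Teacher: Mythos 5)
Your proposal follows essentially the same route as the paper: the same $W^{-1}$-defined feedback control, the same translation $x=y+z$ onto $\mathcal{B}_{T}^0$, the same Krasnoselskii splitting with the neutral $g$-terms forming the contraction $\Pi_1$, the same contradiction argument combining $\liminf_{k\to\infty}\vartheta(k)/k=\gamma$ with (\ref{cond1}) to get invariance of a ball $\mathcal{B}_k$, and the same $\eta_\alpha$-truncation/compact-semigroup argument for complete continuity of $\Pi_2$. The only deviations are immaterial: you place the ($z$-independent) stochastic convolution in $\Pi_2$ rather than $\Pi_1$, which is harmless since adding a fixed summand preserves continuity and relative compactness (at the cost of one extra mean-square-continuity check for equicontinuity), and your choice $y(t)=T_\alpha(t)(\varphi(0)-g(0,\varphi))$ on $[0,T]$ gives $y(0^+)=\varphi(0)-g(0,\varphi)\neq\varphi(0)=y(0^-)$, so to keep $y$ (hence $z$) continuous at $0$ you should instead take $y(t)=S(t)\varphi(0)$ as the paper does and retain the term $-T_\alpha(t)g(0,\varphi)$ inside $\Pi_1$.
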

\begin{proof} Transform the problem(\ref{eq1}) into a fixed-point
problem. To do this, using the hypothesis $(\mathcal{H}.5)$ for an
arbitrary function $x(\cdot)$, define the control by

\begin{equation}\label{control1}
\begin{array}{lll}
  u(t) & =&W^{-1}\{x_1-T_\alpha(T)[\varphi(0)-g(0,x_0)]-g(T,x_T))\\\\
   & -&\int_0^T (T-s)^{\alpha-1}AS_\alpha(T-s)g(s,x_s)ds -\int_0^T (T-s)^{\alpha-1}S_\alpha(T-s)f(s,x_s)ds\\\\
   & -& \int_0^T(T-s)^{\alpha-1}
   S_\alpha(T-s)\sigma(s)dB^H(s)\}(t),\; t\in[0,T].\\
\end{array}
\end{equation}

To formulate the controllability problem in the form suitable for
application of the  fixed point theorem, put the control $u(.)$ into
the stochastic control system (\ref{eqmild2}) and obtain a non
linear  operator $\Pi$ on $\mathcal{B}_{T}$ given  by
$$ \Pi(x)(t)=\left\{
\begin{array}{ll}
& \varphi(t), \;\;\; \text {if } \; t\in(-\infty,0], \\\\
&T_\alpha(t)(\varphi(0)-g(0,\varphi ))+g(t,x_t)+ \int_0^t (t-s)^{\alpha-1}AS_\alpha(t-s)g(s,x_s)ds\\\\
& +\int_0^t
(t-s)^{\alpha-1}S_\alpha(t-s)f(s,x_s)ds+\int_0^t(t-s)^{\alpha-1}
S_\alpha(t-s)Bu(s)ds\\\\
&+\int_0^t(t-s)^{\alpha-1}
S_\alpha(t-s)\sigma(s)dB^H(s),\;\;\; \text {if }\;
 t\in[0,T].
\end{array}\right.
$$

 Then it is clear that to prove the existence of mild
solutions to equation (\ref{eq1}) is equivalent to find a fixed
point for the operator $\Pi$. Clearly, $\Pi x(T)=x_1$, which means
that the control $u$ steers the system from the initial state
$\varphi$ to $x_1$ in time $T$, provided we can obtain a fixed point
of the operator $\Pi$ which implies that the system in controllable.

 Let $y:(-\infty,T]\longrightarrow X$ be the function defined by
 $$
y(t)=\left\{\begin{array}{ll}
       \varphi(t), & \text {if } \; t\in(-\infty,0], \\
       S(t)\varphi(0), & \text {if }\; t\in[0,T],
     \end{array}\right.
 $$
then, $y_0=\varphi$. For each function $z\in \mathcal{B}_{T}$, set
$$
x(t)=z(t)+y(t).
$$

It is obvious that $x$ satisfies the stochastic control system
(\ref{eqmild2}) if and only if $z$ satisfies $z_0=0$ and

\begin{equation}\label{eqmild3}
\begin{array}{ll}
z(t)=&g(t,z_t+y_t)-T_\alpha(t)g(0,\varphi )+
\int_0^t(t-s)^{\alpha-1} AS_\alpha(t-s)g(s,z_s+y_s)ds
\\\\
&+\int_0^t (t-s)^{\alpha-1}S_\alpha(t-s)f(s,z_s+y_s)ds+\int_0^t (t-s)^{\alpha-1}S_\alpha(t-s)Bu_{z+y}(s)ds\\\\
&+\int_0^t (t-s)^{\alpha-1}S_\alpha(t-s)\sigma(s)dB^H(s),\;\;\;

\end{array}
\end{equation}

where $u_{z+y}(t)$ is obtained from (\ref{control1}) by replacing
$x_t=z_t+y_t$.\\
%

Set
$$
\mathcal{B}_{T}^0=\{z\in \mathcal{B}_{T}: z_0=0\};
$$
for any $z\in B_{T}^0$, we have
$$
\|z\|_{\mathcal{B}_{T}^0}=\|z_0\|_{\mathcal{B}_h}+
\sup_{t\in[0,T]}(\E\|z(t)\|^2)^{\frac{1}{2}}=\sup_{t\in[0,T]}(\E\|z(t)\|^2)^{\frac{1}{2}}.
$$
Then, $(\mathcal{B}_{T}^0,\|.\|_{\mathcal{B}_{T}^0})$ is a Banach
space. Define the operator $\widehat{\Pi}:
\mathcal{B}_{T}^0\longrightarrow \mathcal{B}_{T}^0$ by

\begin{equation}
(\widehat{\Pi}z)(t)=\left\{\begin{array}{ll}
&0\;\;\text {if } \; t\in(-\infty,0], \\\\
&g(t,z_t+y_t)-T_\alpha(t)g(0,\varphi ))+ \int_0^t (t-s)^{\alpha-1}AS_\alpha(t-s)g(s,z_s+y_s)ds \\\\
&+\int_0^t (t-s)^{\alpha-1}S_\alpha(t-s)f(s,z_s+y_s)ds\\\\
&+\int_0^t (t-s)^{\alpha-1}S_\alpha(t-s)Bu_{z+y}(s)ds\\\\
&+\int_0^t(t-s)^{\alpha-1}
S_\alpha(t-s)\sigma(s)dB^H(s),\;\;\;\;\;\;\;\text {if } \;
 t\in[0,T].
\end{array}\right.
\end{equation}

Set
$$
\mathcal{B}_k=\{z\in \mathcal{B}_{T}^0:
\|z\|_{\mathcal{B}_{T}^0}^2\leq k\}, \qquad \text{ for some }
k\geq0,
$$
then $\mathcal{B}_k\subseteq \mathcal{B}_{T}^0$ is a bounded closed
convex set, and for $z\in \mathcal{B}_k$, we have
\begin{equation}\label{somme1}
\begin{array}{ll}
  \|z_t+y_t\|_{\mathcal{B}_h}^2& \leq 2(\|z_t\|^2_{\mathcal{B}_{h}}+\|y_t\|^2_{\mathcal{B}_{h}})
  \\\\
   &\leq 4(l^2\sup_{0\leq s\leq t}\E\|z(s)\|^2+\|z_0\|^2_{\mathcal{B}_{h}}\\\\
   &+l^2\sup_{0\leq s\leq
   t}\E\|y(s)\|^2+\|y_0\|^2_{\mathcal{B}_{h}})\\\\
   &\leq 4l^2(k+M^2\E\|\varphi(0)\|^2) +4\|y\|_{\mathcal{B}_h}^2\\\\
   &:=q'.
\end{array}
\end{equation}

It is clear that the operator $\Pi$ has a fixed point if and only if
$\widehat{\Pi}$ has one, so it turns to prove that $\widehat{\Pi}$
has a fixed point. To this end, we decompose $\widehat{\Pi}$ as
$\widehat{\Pi}=\Pi_1+\Pi_2$, where $\Pi_1$ and $\Pi_2$ are defined
on $\mathcal{B}_{T}^0$, respectively by
\begin{equation} (\Pi_1z)(t)=\left\{\begin{array}{ll}
&0\;\;\text {if } \; t\in(-\infty,0], \\\\
&g(t,z_t+y_t)-T_\alpha(t)g(0,\varphi ))+ \int_0^t(t-s)^{\alpha-1}
AS_\alpha(t-s)g(s,z_s+y_s)ds\\\\
&+\int_0^t(t-s)^{\alpha-1}
S_\alpha(t-s)\sigma(s)dB^H(s),\;\;\;\;\text {if } \; t\in[0,T],
\end{array}\right.
\end{equation}

and
 \begin{equation}
(\Pi_2z)(t)=\left\{\begin{array}{ll}
&0\;\;\text {if } \; t\in(-\infty,0], \\\\
& \int_0^t (t-s)^{\alpha-1}S_\alpha(t-s)f(s,z_s+y_s)ds\\\\
&+\int_0^t (t-s)^{\alpha-1}S_\alpha(t-s)Bu_{z+y}(s)ds,\;\text {if }
\; t\in[0,T].
\end{array}\right.
\end{equation}


For the sake of convenience,  the proof will be given in several
steps.

\ni{\bf {Step 1.}} We claim that there exists a positive number $k$,
such that $\Pi_1(x)+\Pi_2(x)\in \mathcal{B}_k$ whenever $x\in
\mathcal{B}_k$. If it is not true, then for each positive number
$k$, there is a function $z^k(.)\in\mathcal{B}_k$, but
$\Pi_1(z^k)+\Pi_2(z^k)\notin \mathcal{B}_k$, that is
$\E\|\Pi_1(z^k)(t)+\Pi_2(z^k)(t)\|^2>k$ for some $t\in [0,T].$
However, on the other hand, we have
\begin{equation}\label{Es1}
\begin{array}{ll}
  k  <\E \|\Pi_1(z^k)(t)+\Pi_2(z^k)(t)\|^2& \leq 6\{\E
   \|T_\alpha(t)g(0,\varphi)\|^2+\E\|g(t,z^k_t+y_t)\|^2\\\\
   &+\E\|\int_0^t
   (t-s)^{\alpha-1}AS_\alpha(t-s)g(s,z^k_s+y_s)ds\|^2\\\\
   &+\E\|\int_0^t
   (t-s)^{\alpha-1}S_\alpha(t-s)f(s,z^k_s+y_s)ds\|^2\\\\
   &+\E\|\int_0^t (t-s)^{\alpha-1}S_\alpha(t-s)Bu_{z^k+y}(s)ds\|^2\\\\

   &+\E\|\int_0^t (t-s)^{\alpha-1}S_\alpha(t-s)\sigma(s)dB^H(s)\|^2 \}\\\\
   &\leq 6\sum_{i=1}^6I_i.
\end{array}
\end{equation}

By $(\mathcal{H}.3)$, $(i)$ of Lemma \ref{operators}, we have
\begin{equation}\label{Es2}
\begin{array}{ll}
  I_1 & \leq \E\|T\alpha(t)g(0,\varphi)\|^2\\\\
   & \leq M^2\|(-A)^{-\beta}\|^2\|(-A)^\beta g(0,\varphi)\|^2\\\\
   & \leq M^2 c_1^2M_g[\|\varphi\|^2_{\mathcal{B}_h}+1].
\end{array}
\end{equation}

By $(\mathcal{H}.3)$, (\ref{somme1}), we have
\begin{equation}\label{Es3}
\begin{array}{ll}
I_2&\leq \|(-A)^{-\beta}\|^2\E\|(-A)^\beta g(t,z^k_t+y_t)\|^2\\\\
&\leq c_1^2M_g[\|z^k_t+y_t\|_{\mathcal{B}_h}^2+1]\\\\
&\leq c_1^2M_g[ 4l^2(k+M^2\E\|\varphi(0)\|^2)
+4\|y\|_{\mathcal{B}_h}^2+1).
\end{array}
\end{equation}
By $(iv)$ of Lemma \ref{operators}, $(\mathcal{H}.3)$, H\"{o}lder
inequality, we have
\begin{equation}\label{Es4}
\begin{array}{ll}
I_3&\leq \E\|\int_0^t
   (t-s)^{\alpha-1}AS_\alpha(t-s)g(s,z^k_s+y_s)ds\|^2\\\\
   &\leq \E\|(\int_0^t
   (t-s)^{\alpha-1}(-A)^{1-\beta}S_\alpha(t-s)(-A)^{\beta}g(s,z^k_s+y_s)ds\|^2\\\\
   &\leq\E( \int_0^t
   (t-s)^{\alpha-1}\|(-A)^{1-\beta}S_\alpha(t-s)(-A)^{\beta}g(s,z^k_s+y_s)\|ds)^2\\\\
   &\leq\frac{\alpha^2M^2_{1-\beta}\Gamma^2(\beta+1)}{\Gamma^2(\alpha\beta+1)}\E( \int_0^t
   (t-s)^{\alpha-1}\|(t-s)^{\alpha\beta-\alpha}(-A)^{\beta}g(s,z^k_s+y_s)\|ds)^2\\\\
   &\leq\frac{\alpha^2M^2_{1-\beta}\Gamma^2(\beta+1)}{\Gamma^2(\alpha\beta+1)} \int_0^t
   (t-s)^{2\alpha\beta-2} ds \int_0^t \E\|(-A)^{\beta}g(s,z^k_s+y_s)\|^2ds\\\\
   &\leq \frac{
   T^{2\alpha\beta-1}\alpha^2M^2_{1-\beta}\Gamma^2(\beta+1)}{(2\alpha\beta-1)\Gamma^2(\alpha\beta+1)}\int_0^t
   M_g( 4l^2(k+M^2\E\|\varphi(0)\|^2) +4\|y\|_{\mathcal{B}_h}^2+1)ds\\\\
&\leq \frac{
   T^{2\alpha\beta}\alpha^2M^2_{1-\beta}\Gamma^2(\beta+1)}{(2\alpha\beta-1)\Gamma^2(\alpha\beta+1)}M_g[
   4l^2(k+M^2\E\|\varphi(0)\|^2) +4\|y\|_{\mathcal{B}_h}^2+1].
\end{array}
\end{equation}

From $(\mathcal{H}.2)$, H\"{o}lder inequality, we have

\begin{equation}\label{Es5}
\begin{array}{ll}
I_4&\leq \E\|\int_0^t
   (t-s)^{\alpha-1}S_\alpha(t-s)f(s,z^k_s+y_s)ds\|^2\\\\
   &\leq
   \frac{M^2T}{\Gamma^2(\alpha)}\E\int_0^t\|(t-s)^{\alpha-1}f(s,z^k_s+y_s)\|^2ds\\\\
   &\leq
   \frac{M^2T}{\Gamma^2(\alpha)}\int_0^T(T-s)^{2\alpha-2}\E\|f(s,z^k_s+y_s)\|^2ds\\\\
   & \leq
   \frac{M^2T}{\Gamma^2(\alpha)}\int_0^T(T-s)^{2\alpha-2}p(s)\vartheta(\|z^k_s+y_s\|^2_{\mathcal{B}_h})ds\\\\
   & \leq
   \frac{M^2T}{\Gamma^2(\alpha)}\vartheta(4l^2(k+M^2\E\|\varphi(0)\|^2) +4\|y\|_{\mathcal{B}_h}^2)\int_0^T(T-s)^{2\alpha-2}p(s) ds\\\\
\end{array}
\end{equation}

From $(ii)$ of $(\mathcal{H}.2)$, H\"{o}lder inequality, it follows
that for $\delta>\frac{1}{2\alpha-1}$,

$$
\begin{array}{ll}
\int_0^T(T-s)^{2\alpha-2}p(s) ds&\leq
\left(\int_0^T(T-s)^{\frac{(2\alpha-2)\delta}{\delta-1}}ds\right)^{\frac{\delta-1}{\delta}}\left(\int_0^T(p(s))^\delta
ds\right)^{\frac{1}{\delta}}\\\\
&\leq
T^{\frac{(2\alpha-1)\delta-1}{\delta}}\left(\int_0^T(p(s))^\delta
ds\right)^{\frac{1}{\delta}}\\\\
&<\infty.
\end{array}
$$

From our assumptions, $(iv)$ of Lemma \ref{operators}, using the
fact that $(\sum_{i=1}^n a_i)^2 \leq n \sum_{i=1}^n a_i^2 $ for any
positive real numbers $a_i$, $i=1,2,...,n,$  we have
\begin{equation}\label{estim0}
\begin{array}{ll}
  \E\|u_{z+y}\|^2 \leq&
  6M_w\{\|x_1\|^2+M^2\E\|\varphi(0)\|^2+M^2c_1^2M_g[\|y\|^2_{\mathcal{B}_h}+1]\\\\
   & +[c_1^2  +\frac{\alpha^2M_{1-\beta}^2T^{2\alpha\beta}\Gamma^2(\beta+1)}{(2\alpha\beta-1)\Gamma^2(\alpha\beta+1)}]M_g[4l^2(k+M^2\E\|\varphi(0)\|^2) +4\|y\|_{\mathcal{B}_h}^2
   +1]\\\\
   &+\frac{M^2}{\Gamma^2(\alpha)}  \vartheta(4l^2(k+M^2\E\|\varphi(0)\|^2) +4\|y\|_{\mathcal{B}_h}^2)\int_0^T(T-s)^{2\alpha-2}p(s)ds\\\\
   &+2\frac{M^2}{\Gamma^2(\alpha)}T^{2H-1}\int_0^T(T-s)^{(2\alpha-2)}\|\sigma(s)\|^2_{\mathcal{L}_2^0}ds\}:=\mathcal{G}.
\end{array}
\end{equation}
For $p>\frac{1}{2\alpha-1}$, we have

\begin{equation}\label{estimfbm}
\begin{array}{ll}
  \int_0^T(T-s)^{(2\alpha-2)}\|\sigma(s)\|^2_{\mathcal{L}_2^0}ds &\leq\left(\int_0^T(T-s)^{
  \frac{(2\alpha-2)p}{p-1}}ds\right)^{\frac{p-1}{p}}\left(\int_0^T\|\sigma(s)\|^{2p}_{\mathcal{L}_2^0}ds\right)^{\frac{1}{p}}\\\\
  & \leq
T^{\frac{(2\alpha-1)p-1}{p}}\left(\int_0^T\|\sigma(s)\|^{2p}_{\mathcal{L}_2^0}ds\right)^{\frac{1}{p}}
\\\\
   & <\infty.
\end{array}
\end{equation}

 By (\ref{estim0}), $(i)$ of Lemma \ref{operators}, H\"{o}lder
 inequality, we have
 \begin{equation}\label{Es6}
\begin{array}{ll}
  I_5 & \leq \E\|\int_0^t (t-s)^{\alpha-1}S_\alpha(t-s)Bu_{z^k+y}(s)ds\|^2\\\\

& \leq \frac{M^2M_b}{\Gamma^2(\alpha)}
\int_0^t(t-s)^{2\alpha-2}ds\int_0^t\E\|u_{z^k+y}(s)\|^2ds\\\\
   & \leq
   \frac{6M^2M_bM_wT^{2\alpha}}{(2\alpha-1)\Gamma^2(\alpha)}\{ \|x_1\|^2+M^2\E\|\varphi(0)\|^2+M^2c_1^2M_g[\|y\|^2_{\mathcal{B}_h}+1]\\\\
   & +[c_1^2  +\frac{\alpha^2M_{1-\beta}^2T^{2\alpha\beta}\Gamma^2(\beta+1)}{(2\alpha\beta-1)\Gamma^2(\alpha\beta+1)}]M_g[4l^2(k+M^2\E\|\varphi(0)\|^2) +4\|y\|_{\mathcal{B}_h}^2
   +1]\\\\
   &+\frac{M^2}{\Gamma^2(\alpha)}  \vartheta(4l^2(k+M^2\E\|\varphi(0)\|^2) +4\|y\|_{\mathcal{B}_h}^2)\int_0^T(T-s)^{2\alpha-2}p(s)ds\\\\
   &+2\frac{M^2}{\Gamma^2(\alpha)}T^{2H-1}\int_0^T(T-s)^{(2\alpha-2)}\|\sigma(s)\|^2_{\mathcal{L}_2^0}ds
   \}.
\end{array}
 \end{equation}

By Lemma \ref{lem2}, Lemma \ref{operators}, (\ref{estimfbm}), for
$p>\frac{1}{2\alpha-1}$, we have

\begin{equation}\label{Es7}
\begin{array}{ll}
  I_6 & \leq\E\|\int_0^t
  (t-s)^{\alpha-1}S_\alpha(t-s)\sigma(s)dB^H(s)\|^2\\\\
   & \leq \frac{2M^2T^{2H-1}}{\Gamma^2(\alpha)}\int_0^T(T-s)^{(2\alpha-2)}\|\sigma(s)\|^2_{\mathcal{L}_2^0}ds
   \\\\
   & \leq \frac{2M^2T^{2H-1}}{\Gamma^2(\alpha)}
   T^{\frac{(2\alpha-1)p-1}{p}}\left(\int_0^T\|\sigma(s)\|^{2p}_{\mathcal{L}_2^0}ds\right)^{\frac{1}{p}}.
\end{array}
 \end{equation}

By (\ref{Es1}),  (\ref{Es2}), (\ref{Es3}), (\ref{Es4}), (\ref{Es5}),
(\ref{Es6}), (\ref{Es7}), we have

$$
\begin{array}{ll}
  k  <&\E \|\Pi_1(z^k)(t)+\Pi_2(z^k)(t)\|^2\leq \overline{K}+ 24l^2k c_1^2M_g+24l^2k \frac{
   T^{2\alpha\beta}\alpha^2M^2_{1-\beta}\Gamma^2(\beta+1)}{(2\alpha\beta-1)\Gamma^2(\alpha\beta+1)}M_g\\\\
&+6(1+\frac{6M^2M_bM_wT^{2\alpha}}{(2\alpha-1)\Gamma^2(\alpha)})
\frac{M^2T}{\Gamma^2(\alpha)}\vartheta(4l^2(k+M^2\E\|\varphi(0)\|^2)
\\\\
&+4\|y\|_{\mathcal{B}_h}^2)\int_0^T(T-s)^{2\alpha-2}p(s)
ds\\\\
&+\frac{144M^2M_bM_wT^{2\alpha}}{(2\alpha-1)\Gamma^2(\alpha)}[c_1^2
+\frac{\alpha^2M_{1-\beta}^2T^{2\alpha\beta}\Gamma^2(\beta+1)}{(2\alpha\beta-1)\Gamma^2(\alpha\beta+1)}]M_gl^2k,
  \end{array}
  $$

 where
 {\small{
 $$
 \begin{array}{ll}
    \overline{K} & = 6M^2 c_1^2(M_g\|\varphi\|^2_{\mathcal{B}_h}+6c_1^2M_g\left[4l^2M^2\E\|\varphi(0)\|^2 +4\|y\|_{\mathcal{B}_h}^2+1\right]\\\\
    &
    +6 \frac{
   T^{2\alpha\beta}\alpha^2M^2_{1-\beta}\Gamma^2(\beta+1)}{(2\alpha\beta-1)\Gamma^2(\alpha\beta+1)}
   M_g\left[ 4l^2M^2\E\|\varphi(0)\|^2 +4\|y\|_{\mathcal{B}_h}^2+1\right]\\\\
   &+\frac{36M^2M_bM_wT^{2\alpha}}{(2\alpha-1)\Gamma^2(\alpha)}\{
   \|x_1\|^2+M^2\E\|\varphi(0)\|^2+M^2c_1^2M_g\left[\|y\|^2_{\mathcal{B}_h}+1\right]\\\\
   &+\frac{6M^2M_bM_wT^{2\alpha}}{(2\alpha-1)\Gamma^2(\alpha)}[c_1^2  +\frac{\alpha^2M_{1-\beta}^2T^{2\alpha\beta}
   \Gamma^2(\beta+1)}{(2\alpha\beta-1)\Gamma^2(\alpha\beta+1)}]
   M_g\left[4l^2M^2\E\|\varphi(0)\|^2 +4\|y\|_{\mathcal{B}_h}^2+1\right]
   \}\\\\
    &+6(1+\frac{6M^2M_bM_wT^{2\alpha}}{(2\alpha-1)\Gamma^2(\alpha)})\frac{2M^2T^{2H-1}}{\Gamma^2(\alpha)}
   T^{\frac{(2\alpha-1)p-1}{p}}\left(\int_0^T\|\sigma(s)\|^{2p}_{\mathcal{L}_2^0}ds\right)^{\frac{1}{p}}.
 \end{array}
 $$}}
 Noting that $\overline{K}$ is independent of $k$.  Dividing both sides by
 $k$ and taking the lower limit as $k\longrightarrow\infty$, we
 obtain
 $$
 q'=4l^2(k+M\E\|\varphi(0)\|^2)
 +4\|y\|_{\mathcal{B}_h}\longrightarrow\infty\;\text { as }
 k\longrightarrow\infty,
 $$
 $$
\liminf_{k\longrightarrow\infty}\frac{\vartheta(q')}{k}=\liminf_{k\longrightarrow\infty}\frac{\vartheta(q')}{q'}.\frac{q'}{k}=4l^2\gamma.
 $$
 Thus, we have
 $$
 \begin{array}{ll}
   1 & \leq 24l^2 c_1^2M_g+24l^2 \frac{
   T^{2\alpha\beta}\alpha^2M^2_{1-\beta}\Gamma^2(\beta+1)}{(2\alpha\beta-1)\Gamma^2(\alpha\beta+1)}M_g\\\\
&+24l^2\gamma(1+\frac{6M^2M_bM_wT^{2\alpha}}{(2\alpha-1)\Gamma^2(\alpha)})
\frac{M^2T}{\Gamma^2(\alpha)}\int_0^T(T-s)^{2\alpha-2}p(s)
ds\\\\
&+\frac{144M^2M_bM_wT^{2\alpha}}{(2\alpha-1)\Gamma^2(\alpha)}[c_1^2
+\frac{\alpha^2M_{1-\beta}^2T^{2\alpha\beta}\Gamma^2(\beta+1)}{(2\alpha\beta-1)\Gamma^2(\alpha\beta+1)}]M_gl^2.
 \end{array}
 $$

 This contradicts  (\ref{cond1}). Hence for some positive $k$,
 $$
(\Pi_1+\Pi_2) (\mathcal{B}_k)\subseteq \mathcal{B}_k.
 $$

\ni{\bf {Step 2.}} $\Pi_1$ is a contraction.\\
Let $t\in[0,T]$ and $z^1, z^2\in\mathcal{B}_{T}^0$
$$
 \begin{array}{ll}
             \E\|(\Pi_1z^1)(t)-(\Pi_1z^2)(t) \|^2& \leq 2\E\|g(t,z_t^1+y_t)-g(t,z_t^2+y_t)\|^2\\ \\
              &
              +2\E\|\int_0^t(t-s)^{\alpha-1}AS_\alpha(t-s)(g(s,z^1_s+y_s)-g(s,z^2_s+y_s))ds\|^2
              \\\\
              & \leq 2M_g\|(-A)^{-\beta}\|^2\|z_s^1-z_s^2\|^2_{\mathcal{B}_h}\\ \\
              &+2\int_0^t(t-s)^{\alpha-1}(-A)^{1-\beta}S_\alpha(t-s)(-A)^\beta(g(s,z^1_s+y_s)-g(s,z^2_s+y_s))ds\|^2
              \\\\
& \leq 2M_g\|(-A)^{-\beta}\|^2\|z_s^1-z_s^2\|^2_{\mathcal{B}_h}\\\\

&+\frac{2\alpha^2M^2_{1-\beta}\Gamma^2(\beta+1)}{\Gamma^2(\alpha\beta+1)}\int_0^t(t-s)^{2\alpha\beta-2}ds\int_0^tM_g
              \|z_s^1-z_s^2\|^2_{\mathcal{B}_h}ds\\\\
              & \leq  2M_g\left\{\|(-A)^{-\beta}\|^2+\frac{2\alpha^2M^2_{1-\beta}\Gamma^2(\beta+1)}{\Gamma^2(\alpha\beta+1)}\frac{T^{2\alpha\beta}}{2\alpha\beta-1}\right\}(2l^2\sup_{0\leq s\leq T}\\\\
              &\E\|z^1(s)-z^2(s)\|^2 +2(\|z^1_0\|^2_{\mathcal{B}_h}+\|z^2_0\|^2_{\mathcal{B}_h})
              \\\\
              & \leq \nu\sup_{0\leq s\leq T}\E\|z^1(s)-z^2(s)\|^2)
              \quad ( \text{ since }\;z^1_0=z^2_0=0)
           \end{array}
$$
Taking supremum over $t$,
$$
\|(\Pi_1z^1)(t)-(\Pi_1z^2)(t) \|_{\mathcal{B}_{T}^0}\leq
\nu\|z^1-z^2\|_{\mathcal{B}_{T}^0},
$$
where
$$
\nu=4M_gl^2\left\{c_1^2+\frac{2\alpha^2M^2_{1-\beta}\Gamma^2(\beta+1)}{\Gamma^2(\alpha\beta+1)}\frac{T^{2\alpha\beta}}{2\alpha\beta-1}\right\}.
$$
By  $(\mathcal{H}.6)$, we have $\nu<1$. Thus $\Pi_1$ is a
contraction on $\mathcal{B}_{T}^0$.

\ni{\bf {Step 3.}} $\Pi_2$ is completely continuous
$\mathcal{B}_{T}^0$.\\
\ni{\bf {Claim 1.}} $\Pi_2$ is  continuous on $\mathcal{B}_{T}^0$.\\
Let $z^n$ be a sequence such that $z^n\longrightarrow z$ in
  $\mathcal{B}_{T}^0$. Then,  for  $t\in[0,T]$,  and thanks to  hypothesis $(\mathcal{H}.2)-(\mathcal{H}.3)$, for
     each $t\in[0,T]$, we have
    $$
    f(t,z_t^n+y_t)\longrightarrow f(t,z_t+y_t),
    $$
    $$
    g(t,z_t^n+y_t)\longrightarrow g(t,z_t+y_t).
    $$

%

 By  the dominated convergence theorem, we obtain  continuity of $\Pi_2$
$$
\begin{array}{ll}
  \E\|\Pi_2z^n(t)-(\Pi_2z)(t)\|^2 & \leq 2\E\|\int_0^t(t-s)^{\alpha-1}S_\alpha(t-s)B[u_{z^n+y}-u_{z+y}]ds\|^2\\ \\
   & +2\E\|\int_0^t(t-s)^{\alpha-1}S_\alpha(t-s)[f(s,z_s^n+y_s)-f(s,z_s+y_s)]ds\|^2 \\\\
&\leq\frac{2M^2M_b}{\Gamma^2(\alpha+1)}\frac{T^{2\alpha-1}}{2\alpha-1}\int_0^T \E\| u_{z^n+y}(s)-u_{z+y}(s)\|^2ds\\\\
&+\frac{2M^2}{\Gamma^2(\alpha+1)}\frac{T^{2\alpha-1}}{2\alpha-1}\int_0^T\E\|f(s,z_s^n+y_s)-f(s,z_s+y_s)\|^2ds\\ \\
   &\longrightarrow 0 \text{ as } n\longrightarrow\infty.
\end{array}
$$
Thus, $\Pi_2$ is continuous.

 \ni{\bf {Claim 2.}} $\Pi_2$ maps  $\mathcal{B}_k$ into equicontinuous
 family.   Let $z\in \mathcal{B}_k$ and $|h|$  be sufficiently small,    we have
  $$
  \begin{array}{ll}
    \E\|&(\Pi_2z)(t+h)-(\Pi_2z)(t)\|^2 \leq  \E\|\int_0^{t+h}(t+h-s)^{\alpha-1}S_\alpha(t+h-s)Bu_{z+y}(s)ds
    \\\\
     & +\int_0^{t+h}(t+h-s)^{\alpha-1}S_\alpha(t+h-s)f(s,z_s+y_s)ds\\\\
     & -\int_0^{t}(t-s)^{\alpha-1}S_\alpha(t-s)Bu_{z+y}(s)ds\\\\
     &-\int_0^t(t-s)^{\alpha-1}S_\alpha(t-s)f(s,z_s+y_s)ds\|^2\\\\
     &\leq 6
     \E\|\int_0^{t}\left((t+h-s)^{\alpha-1}-(t-s)^{\alpha-1}\right)S_\alpha(t+h-s)Bu_{z+y}(s)ds\|^2\\\\
     &+6\E\|\int_t^{t+h}(t+h-s)^{\alpha-1}S_\alpha(t+h-s)Bu_{z+y}(s)ds\|^2\\\\
     &+6
     \E\|\int_0^{t}(t-s)^{\alpha-1}\left(S_\alpha(t+h-s)-S_\alpha(t-s)\right)Bu_{z+y}(s)ds\|^2\\\\
     &+6
     \E\|\int_0^{t}\left((t+h-s)^{\alpha-1}-(t-s)^{\alpha-1}\right)S_\alpha(t+h-s)f(s,z_s+y_s)ds\|^2\\\\
     &+6\E\|\int_t^{t+h}(t+h-s)^{\alpha-1}S_\alpha(t+h-s)f(s,z_s+y_s)ds\|^2\\\\
     &+6
     \E\|\int_0^{t}(t-s)^{\alpha-1}\left(S_\alpha(t+h-s)-S_\alpha(t-s)\right)f(s,z_s+y_s)ds\|^2.
  \end{array}
  $$
  From $(iii)$ of Lemma \ref{operators}, we have  $S_\alpha(t)$ is
  compact for any $t>0$. Let $0<\varepsilon<t<T$, and $\delta>0$ such that
  $\|S_\alpha(\tau_1)-S_\alpha(\tau_2)\|\leq\epsilon$ for every
  $\tau_1,\tau_2\in[0,T]$  with $|\tau_1-\tau_2|\leq\delta $.
From (\ref{estim0}), $(i)$  of Lemma \ref{operators}, H\"{o}lder
inequality, it follows that
\begin{equation}\label{equi1}
 \begin{array}{ll}
    &\E\|(\Pi_2z)(t+h)-(\Pi_2z)(t)\|^2 \\\\
    &\leq
    \frac{6M^2M_b \mathcal{G}T}{\Gamma^2(\alpha)}\int_0^{t}\left((t+h-s)^{\alpha-1}-(t-s)^{\alpha-1}\right)^2ds
    \\\\
     & +\frac{6M^2M_b\mathcal{G} h}{\Gamma^2(\alpha)}\int_t^{t+h}(t+h-s)^{2\alpha-2}ds \\\\
     &+\frac{6M^2T^{2\alpha}\mathcal{G}}{2\alpha-1} \epsilon\\\\
      &+\frac{6M^2T\vartheta(q')}{\Gamma^2(\alpha)}\int_0^t\left((t+h-s)^{\alpha-1}-(t-s)^{\alpha-1}\right)^2p(s)ds\\\\
      &+\frac{6M^2T\vartheta(q')}{\Gamma^2(\alpha)}\int_t^{t+h}(t+h-s)^{2(\alpha-1)}p(s)ds\\\\
      &+\frac{6M^2T}{2\alpha-1}\epsilon\int_0^{t}(t-s)^{2(\alpha-1)}p(s)ds.\\
  \end{array}
  \end{equation}

  From $(ii)$ of $(\mathcal{H}.2)$, H\"{o}lder inequality, it follows
that for $\delta>\frac{1}{2\alpha-1}$,

$$
\begin{array}{ll}
\int_0^t(t-s)^{2\alpha-2}p(s) ds&\leq
\left(\int_0^t(t-s)^{\frac{(2\alpha-2)\delta}{\delta-1}}ds\right)^{\frac{\delta-1}{\delta}}\left(\int_0^T(p(s))^\delta
ds\right)^{\frac{1}{\delta}}\\\\
&\leq
T^{\frac{(2\alpha-1)\delta-1}{\delta}}\left(\int_0^T(p(s))^\delta
ds\right)^{\frac{1}{\delta}}\\\\
&<\infty.
\end{array}
$$
Similarly, we have
$$
\int_0^{t}(t+h-s)^{2(\alpha-1)}p(s)ds<\infty.
$$
By the dominated convergence theorem, we have
$$
\int_0^t\left((t+h-s)^{\alpha-1}-(t-s)^{\alpha-1}\right)^2p(s)ds\longrightarrow
0, \text{ as } h\longrightarrow0.
$$
   Therefore, for sufficiently small positive number $\epsilon$, we
   have from
   (\ref{equi1})  that
   $$
\E\|(\Pi_2z)(t+h)-(\Pi_2z)(t)\|^2\longrightarrow0\text{ as }
h\longrightarrow0.
   $$

   Thus, $\Pi_2$ maps $\mathcal{B}_k$ into an equicontinuous family
  of   functions.

  \ni{\bf {Claim 3.}} $(\Pi_2 \mathcal{B}_k)(t)$ is precompact set in $X$.\\
  Let $0<t\leq T$ be fixed,   and $\epsilon$ be a number satisfying  $0<\epsilon<t$. For $\delta>0$  and $z\in \mathcal{B}_k$,  we define

  $$
  \begin{array}{ll}
     (\Pi^\delta_{2,\epsilon}z)(t)&=
     \alpha\int_0^{t-\epsilon}\int_\delta^{\infty}\theta(t-s)^{\alpha-1}\eta_\alpha(\theta)S((t-s)^\alpha\theta)f(s,z_s+y_s)d\theta ds\\\\ \\
     &
     +\alpha\int_0^{t-\epsilon}\int_\delta^{\infty}\theta(t-s)^{\alpha-1}\eta_\alpha(\theta)S((t-s)^\alpha\theta)Bu_{z+y}(s)d\theta ds\\\\
     &=S(\epsilon^\alpha\delta)\alpha\int_0^{t-\epsilon}\int_\delta^{\infty}\theta(t-s)^{\alpha-1}\eta_\alpha(\theta)S((t-s)^\alpha\theta-\epsilon^\alpha\delta)
     f(s,z_s+y_s)d\theta ds\\\\ \\
     &+
     S(\epsilon^\alpha\delta)\alpha\int_0^{t-\epsilon}\int_\delta^{\infty}\theta(t-s)^{\alpha-1}\eta_\alpha(\theta)S((t-s)^\alpha\theta-\epsilon^\alpha\delta)
     Bu_{z+y}(s)d\theta ds
  \end{array}
  $$
From the compactness of $S(t)$ $(t>0)$, we obtain that the set
$V^\delta_\epsilon(t)=\{(\Pi^\delta_{2,\epsilon}z)(t):\; z\in
\mathcal{B}_k\}$ is relative compact in $X$ for every $\epsilon$,
$0<\epsilon<t$ and $\delta>0$. Moreover, for every $z\in
\mathcal{B}_k$, we have
\begin{equation}\label{prec1}
\begin{array}{ll}
    \E\|\Pi_{2}z)(t)-&\Pi^\delta_{2,\epsilon}z)(t)\|^2\leq
    4\alpha^2\E\|\int_0^{t}\int_0^\delta\theta(t-s)^{\alpha-1}\eta_\alpha(\theta)S((t-s)^\alpha\theta)f(s,z_s+y_s)d\theta ds\|^2
    \\\\
   & +4\alpha^2\E\|\int^t_{t-\epsilon}\int_\delta^{\infty}\theta(t-s)^{\alpha-1}\eta_\alpha(\theta)S((t-s)^\alpha\theta)f(s,z_s+y_s)d\theta ds\|^2\\\\
   &+4\alpha^2\E\|\int_0^{t}\int_0^\delta\theta(t-s)^{\alpha-1}\eta_\alpha(\theta)S((t-s)^\alpha\theta)Bu_{z+y}(s)d\theta ds\|^2\\\\
   &+4\alpha^2\E\|\int^t_{t-\epsilon}\int_\delta^{\infty}\theta(t-s)^{\alpha-1}\eta_\alpha(\theta)S((t-s)^\alpha\theta)Bu_{z+y}(s)d\theta ds\|^2\\\\
   &=4\sum_{i=1}^4J_i.

   \end{array}
   \end{equation}

A similar argument as before, we can show that
\begin{equation}\label{j1}
\begin{array}{ll}
  J_1 & \leq \alpha^2M^2T\E\int_0^t\|\int_0^\delta\theta(t-s)^{\alpha-1}\eta_\alpha(\theta)f(s,z_s+y_s)d\theta\|^2ds
  \\\\
   & \leq  \alpha^2M^2T
   \|\int_0^\delta\theta\eta_\alpha(\theta)d\theta\|^2\int_0^t(t-s)^{2\alpha-2}\E\|f(s,z_s+y_s)\|^2ds\\\\
   & \leq \alpha^2M^2T
   \vartheta(q')\|\int_0^\delta\theta\eta_\alpha(\theta)d\theta\|^2\int_0^t(t-s)^{2\alpha-2}p(s)ds.
\end{array}
\end{equation}
For $J_2$,  by (\ref{prob1}),  we have
\begin{equation}\label{j2}
\begin{array}{ll}
  J_2 &  \leq \alpha^2M^2T
   \vartheta(q')\|\int_0^\infty\theta\eta_\alpha(\theta)d\theta\|^2\int_{t-\epsilon}^t(t-s)^{2\alpha-2}p(s)ds\\\\
   &\leq \frac{\alpha^2M^2T
   \vartheta(q')}{\Gamma^2(1+\alpha)}\int_{t-\epsilon}^t(t-s)^{2\alpha-2}p(s)ds\\\\
   &\leq \frac{\alpha^2M^2T
   \vartheta(q')}{\Gamma^2(1+\alpha)}\left(\int_{t-\epsilon}^t(t-s)^{\frac{(2\alpha-2)\delta}{\delta-1}}ds\right)^{\frac{\delta-1}{\delta}}\left(\int_{t-\epsilon}^t(p(s))^\delta
ds\right)^{\frac{1}{\delta}}\\\\
&\leq \frac{\alpha^2M^2T
   \vartheta(q')}{\Gamma^2(1+\alpha)}\epsilon^{\frac{(2\alpha-1)\delta-1}{\delta}}\left(\int_{t-\epsilon}^t(p(s))^\delta
ds\right)^{\frac{1}{\delta}},
\end{array}
\end{equation}

where $\delta>\frac{1}{2\alpha-1}$.

For $J_3$, by H\"{o}lder inequality,  we have
\begin{equation}\label{j3}
\begin{array}{ll}
  J_3 &  \leq \alpha^2 \E\left(\int_0^{t}\int_0^\delta\|\theta(t-s)^{\alpha-1}\eta_\alpha(\theta)S((t-s)^\alpha\theta)Bu_{z+y}(s)\|d\theta ds\right)^2\\\\
&\leq
\alpha^2M^2M_bT\int_0^t(t-s)^{2\alpha-2}\E\|u_{z+y}(s)\|^2ds\|\int_0^\delta
\theta\eta_\alpha(\theta)d\theta\|^2.
  \end{array}
\end{equation}

For $J_4$, by (\ref{prob1}), we have
\begin{equation}\label{j4}
 \begin{array}{ll}
  J_4 &  \leq \alpha^2M^2\E\int_{t-\epsilon}^{t}\|(t-s)^{\alpha-1}Bu_{z+y}(s)\|^2ds\int_{t-\epsilon}^{t}\|\int_0^\infty \theta\eta_\alpha(\theta)d\theta\|^2ds \\\\
&\leq\frac{\epsilon\alpha^2M^2M_b}{\Gamma^2(\alpha+1)}\int_{t-\epsilon}^{t}(t-s)^{2\alpha-2}\E\|u_{z+y}(s)\|^2ds
  \end{array}
\end{equation}

 Put (\ref{j1}), (\ref{j2}), (\ref{j3}), (\ref{j4}) into
 (\ref{prec1}) to obtain
$$
\E\|\Pi_{2}z)(t)-\Pi^\delta_{2,\epsilon}z)(t)\|^2\longrightarrow0,\qquad
\text{ as } \epsilon\longrightarrow 0^+,\, \delta\longrightarrow0^+.
$$
Therefore,  there are  precompact sets arbitrarily close to the set
$V(t)=\{(\Pi_2z)(t):\; z\in B_k\}$, hence the set $V(t)$ is also
precompact in $X$.

Thus, by Arzela-Ascoli theorem $\Pi_2$ is a compact operator. These
arguments enable us to conclude that $\Pi_2$  is completely
continuous, and by the fixed point theorem of Karasnoselskii there
exists a fixed point $z(.)$ for $\widehat{\Pi}$ on $\mathcal{B}_k$.
If we define $x(t)=z(t)+y(t),$ $-\infty<t\leq T$, it is easy to see
that $x(.)$ is a mild solution  of (\ref{eq1}) satisfying
$x_0=\varphi$, $x(T)=x_1$. Then the proof is complete.

\end{proof}

\section{Example}

 To illustrate the previous result, we consider the following   fractional neutral stochastic partial differential equation with infinite
 delays,
driven by a fractional Brownian motion of the form\\
   \small{
   \begin{equation}\label{eq1part}
 \left\{\begin{array}{llll}
 dJ_t^{1-\alpha}[v(t,\xi)-g(t,v(t-r,\xi))-\varphi(0,\xi)+g(0,v(-r,\xi))]=[\frac{\partial^2}{\partial^2\xi}
v(t,\xi)+c(\xi)u(t)\\\\
+f(t,t-r,\xi)]dt
 +\sigma (t)\frac{dB^H(t)}{dt},\quad 0\leq t\leq T,\,r>0,\, \, 0\leq \xi\leq1\\\\
v(t,0)=v(t,1)=0,\quad \quad 0\leq t\leq T,\\\\
v(s,\xi)=\varphi(s,\xi) ,\,\;;-\infty< s \leq 0\quad 0\leq \xi\leq1,
\end{array}\right.
\end{equation}
} where   $B^H(t)$ is  cylindrical  fractional Brownian motion,
$\varphi: (-\infty,0]\times[0,1]\longrightarrow\R$ is a given
measurable and satisfies $\|\varphi\|^2_{\mathcal{B}_h}<\infty.$

We rewrite (\ref{eq1part}) into abstract form of (\ref{eq1}). We
take  $X=Y=U=L^2([0,1])$. Define the operator $A:D(A)\subset
X\longrightarrow X$ given  by $A=\frac{\partial^2}{\partial^2\xi}$
with
$$
D(A)=\{y\in X:\,y' \mbox{ is absolutely continuous},  y''\in X,\quad
y(0)=y(1)=0\},
$$
then we get
$$
Ax=\sum_{n=1}^\infty n^2<x,e_n>_Xe_n,\quad x\in D(A),
$$
 where $
e_n:=\sqrt{\frac{2}{\pi}}\sin nx,\; n=1,2,.... $
 is  an orthogonal  set of eigenvector of $-A$.\\

 The bounded linear operator $(-A)^{\frac{2}{3}}$ is given by
 $$
(-A)^{\frac{2}{3}}x=\sum_{n=1}^\infty
n^\frac{4}{3}<x,e_n>_Xe_n,\quad
 $$
 with domain
 $$
 D((-A)^{\frac{2}{3}})=\{x\in X,\sum_{n=1}^\infty
n^\frac{4}{3}<x,e_n>_Xe_n\in X \}.
$$
 It is  known that $A$  generates  a compact
analytic semigroup $\{S(t)\}_{t\geq 0}$ in $X$, and is given by (see
\cite{pazy})
$$ S(t)x=\sum_{n=1}^{\infty}e^{-n^2t}<x,e_n>e_n,
$$
for $x\in X$ and $t\geq0$.  Since the semigroup $\{S(t)\}_{t\geq 0}$
is analytic, there exists a constant $M>0$  such that
$\|S(t)\|^2\leq M$ for every $t\geq0$. In other words, the condition
$(\mathcal{H}.1)$ holds.

If we choose $\alpha\in(\frac34,1)$,
$$
S_\alpha(t)x=\int_0^\infty\alpha\theta\eta_\alpha(\theta)S(\theta
t^\alpha) d\theta, \quad x\in X.
 $$

  \ni Further,   the
operator $B: \R\longrightarrow X$ is a bounded linear operator
defined by
  $
Bu(t)(\xi)=c(\xi)u(t),\;0\leq \xi \leq1, \, c(\xi)\in L^2([0,1]),
  $ and    the operator $W:L^2([0,T],
U)\longrightarrow X $ is given by
  $$
W u(\xi)=\int_0^T(T-s)^{\alpha-1}S_\alpha(T-s)c(\xi)u(t)ds,\;\;0\leq
\xi \leq1,
  $$
  $W$ is linear and by H\"{o}lder inequality, we can show that $W$ is bounded  operator but not necessarily one-to-one.
  Let
  $$Ker \,W=\{x\in L^2([0,T],U),  \; W x=0\}
  $$
   be the null space of
  $W$ and $[Ker\, W]^\bot$ be its orthogonal complement in
  $L^2([0,T],U)$. Let $\widetilde{W}: [Ker\, W]^\bot\longrightarrow Range(W)
  $ be the restriction of $W$ to $[Ker\, W]^\bot$, $\widetilde{W}$ is
  necessarily one-to-one operator. The inverse mapping theorem  says that
  $\widetilde{W}^{-1}$ is bounded since $[Ker\, W]^\bot$ and $Range
  (W)$ are Banach spaces. So that  $W^{-1}$ is bounded and takes values in  $L^2([0,T],U)\setminus Ker\, W$,  hypothesis  $(\mathcal{H}.5)$ is satisfied.

We choose the phase function $h(s)=e^{2s}$, $s<0$, then
$l=\int_{-\infty}^0h(s)ds=\frac{1}{2}<\infty$, and the abstract
phase space $\mathcal{B}_h$ is Banach space with the norm
 $$
\|\varphi\|_{\mathcal{B}_h}=\int_{-\infty}^0h(s)\sup_{\theta\in[s,0]}(\E\|\varphi(\theta)\|^2)^\frac{1}{2}ds.
$$

To rewrite the initial-boundary value problem (\ref{eq1part}) in the
abstract form (\ref{eq1}), we assume the following:

For $(t,\varphi)\in[0,T]\times\mathcal{B}_h$, where
$\varphi(\theta)(\xi)=\varphi(\theta,\xi),$
$(\theta,\xi)\in(-\infty,0]\times[0,1]$, we put
$v(t)(\xi)=v(t,\xi)$. Define $g:[0,T]\times
\mathcal{B}_h\longrightarrow X$, $f:[0,T]\times
\mathcal{B}_h\longrightarrow X$ by

\begin{align*}
 (-A)^{\frac23}g(t,\varphi)(\xi)&=\int_{-\infty}^0e^{-4\theta}\varphi(\theta)(\xi)d\theta,\\\\
  f(t,\varphi)(\xi)&=
  \int_{-\infty}^0\mu(t,\xi,\theta)f_1(\varphi(\theta)(\xi))d\theta,
\end{align*}

where
\begin{itemize}
  \item [(i)] the function $\mu(t,\xi,\theta)\geq0$ is continuous
  in $[0,T]\times[0,1]\times(-\infty,0)$,
  $$
\int_{-\infty}^0\mu(t,\xi,\theta)d\theta=p_1(t,\xi)<\infty,
\qquad\text{ and  } \qquad
\left(\int_0^1p_1^2(t,\xi)\right)\frac12=p(t)<\infty;
  $$
  \item [(ii)] the function $f_1(.)$ is continuous, $0\leq f_1(v(\theta,\xi))\leq
  \vartheta(\|v(\theta,.)\|_{L^2})$ for
  $(\theta,\xi)\in(-\infty,0)\times(0,1)$, where $\vartheta(.):
  [0,\infty)\longrightarrow(0,\infty)$ is continuous and
  nondecreasing.
\end{itemize}

 By the similar method as in Balasubramaniyam and Ntouyas
 \cite{nto06}, we can show that the assumptions
 $(\mathcal{H}.2)-(\mathcal{H}.3)$  are satisfied.

In order to define the operator $Q: Y:=L^2([0,1],\R)\longrightarrow
Y$, we choose a sequence $\{\lambda_n\}_{n\in\N}\subset \R^+$, set
$Qe_n=\lambda_ne_n$, and assume that
$$
tr(Q)=\sum_{n=1}^\infty \sqrt{\lambda_n}<\infty.
$$
 Define the fractional Brownian motion in $Y$ by
$$
B^H(t)=\sum_{n=1}^\infty \sqrt{\lambda_n}\beta^H(t)e_n,
$$
where $H\in(\frac{1}{2},1)$ and $\{\beta^H_n\}_{n\in\N}$ is a
sequence of one-dimensional fractional Brownian motions mutually
independent. Let us assume the  function
$\sigma:[0,+\infty)\rightarrow
\mathcal{L}_2^0(L^2([0,1]),L^2([0,1]))$ satisfies
 $$\int_0^T\|\sigma(s)\|^{2p}_{\mathcal{L}_2^0}ds< \infty,\;\; \text {for some }  p>\frac{1}{2\alpha-1}.
  $$

Then all the assumptions of Theorem \ref{th1} are satisfied.
Therefore, we conclude that the system (\ref{eq1part}) is
controllable on $(-\infty,T]$.




\end{document}